\newcounter{denseversion}
\newcounter{comments}
\newcounter{authorcounter}
\newcounter{adresscounter}
\def\title#1{\gdef\@title{#1}}
\def\@title{}
\def\subtitle#1{\gdef\@subtitle{#1}}
\def\@subtitle{}
\def\authortagsused{0}
\def\adresstag#1{\if!#1!\else$^{\;#1\;}$\fi}
\def\@authorsep#1{
  \ifnum\value{authorcounter}=#1 and \else\unskip, \fi
}
\renewcommand{\author}[2][]{
  \stepcounter{authorcounter}
  \if!#1!\else\gdef\authortagsused{1}\fi
  \ifnum\value{authorcounter}=1
    \def\@authorstringa{#2\adresstag{#1}}
    \def\@authorstringb{#2}
    \def\@authorstringc{#2\adresstag{#1}}
  \else
    \ifnum\value{authorcounter}=2
      \g@addto@macro\@authorstringa{\@authorsep{2}#2\adresstag{#1}}
      \g@addto@macro\@authorstringb{\@authorsep{2}#2}
      \g@addto@macro\@authorstringc{\\#2\adresstag{#1}}
    \else
      \g@addto@macro\@authorstringa{\@authorsep{3}#2\adresstag{#1}}
      \g@addto@macro\@authorstringb{\@authorsep{3}#2}
      \g@addto@macro\@authorstringc{\\#2\adresstag{#1}}
    \fi
  \fi}
\def\@author{\ifnum\value{denseversion}=0\@authorstringa\else\@authorstringb\fi}
\def\@adressstringa{}
\def\@adressstringb{}
\newcommand{\adress}[2][]{
  \stepcounter{adresscounter}
  \ifnum\value{adresscounter}=1
    \g@addto@macro\@adressstringa{\ifnum\authortagsused=0\def\br{\\}\else\def\br{, }\fi\adresstag{#1}#2}
    \g@addto@macro\@adressstringb{\def\br{\\}\adresstag{#1}\parbox[t]{14cm}{#2}}
  \else
    \g@addto@macro\@adressstringa{\\[\bigskipamount]\adresstag{#1}#2}
    \g@addto@macro\@adressstringb{\\[\medskipamount]\adresstag{#1}\parbox[t]{14cm}{#2}}
  \fi}
\def\preprint#1{\gdef\@preprint{#1}}
\def\@preprint{}
\def\keywords#1{\gdef\@keywords{#1}}
\def\@keywords{}
\def\msc#1{\gdef\@msc{#1}}
\def\@msc{}
\def\email#1{
   \gdef\@email{#1}
   \g@addto@macro\@authorstringc{ {\it (#1)}}}
\def\@email{}
\def\dedication#1{\gdef\@dedication{#1}}
\def\@dedication{}
\def\mybaselinestretch#1{
  \gdef\@mybaselinestretch{#1}
  \renewcommand{\baselinestretch}{\@mybaselinestretch}}
\def\myparskip#1{
  \gdef\@myparskip{#1}
  %\setenumstandard
  \setlength{\parskip}{\@myparskip}}
\newlength{\@listleftmargin}
\def\setenumstandard{
  \setlist{leftmargin=\@listleftmargin,itemsep=0pt,topsep=0pt,partopsep=0pt,parsep=\@myparskip}
  \setlist[enumerate]{align=left,labelsep=*,leftmargin=\@listleftmargin,itemsep=0pt,topsep=0pt,partopsep=0pt,parsep=\@myparskip}
}
\def\denseversion{
  \setcounter{denseversion}{1}
  \newgeometry{left=3cm,right=3cm,top=3cm}
  \mybaselinestretch{1.1}
  \myparskip{0.8ex}
  \normalfont
  \def\possiblelinebreak{}
  \fancyfoot[C]{\itshape{--$\,\,$\thepage$\,\,$--}}}
\def\possiblelinebreak{\\}
\renewcommand{\emph}[1]{\def\reserved@a{it}\ifx\f@shape\reserved@a\ul{#1}\else\textit{#1}\fi}
\def\setcrefnames{}
\newcommand{\mytableofcontents}{
   \ifnum\value{denseversion}=0
     \tableofcontents
     \setcrefnames %% \tableofcontents macht die \crefnames kaputt
   \else
     \renewcommand{\baselinestretch}{1.1}
     \setlength{\parskip}{0ex}
     \normalfont
     \begingroup
     \def\addvspace##1{\vskip0.4em}
     \tableofcontents
     \setcrefnames %% \tableofcontents macht die \crefnames kaputt
     \endgroup
     \renewcommand{\baselinestretch}{\@mybaselinestretch}
     \setlength{\parskip}{\@myparskip}
     \normalfont
   \fi}
\newlength{\zeilenlaenge}
\def\putindent#1{
  \settowidth{\zeilenlaenge}{#1}
  \ifnum\zeilenlaenge>\textwidth
    #1
  \else
    \noindent #1
  \fi
}
\def\pdfdaten{
  \hypersetup{
    pdftitle = {\@title},
    pdfauthor = {\@author},
    pdfkeywords = {\@keywords},    
    bookmarksopen = true,
    bookmarksopenlevel = 1
  }}  
\def\showkeywords{\begin{flushleft}\footnotesize\textbf{Keywords}: \@keywords\end{flushleft}}
\def\showmsc{\begin{flushleft}\footnotesize\textbf{MSC 2010}: \@msc\end{flushleft}}
\def\mytitle{}
\def\zmptitle{
  \begin{tabular}{cc}
    \begin{minipage}[c]{0.4\textwidth}
      \begin{flushleft}
        \includegraphics[width=110pt]{../../tex/zmp}
      \end{flushleft}  
    \end{minipage}&
    \begin{minipage}[c]{0.55\textwidth}
      \begin{flushright}
      {\small\sf\@preprint}
      \end{flushright}
    \end{minipage}
  \end{tabular}
  \vskip 2cm}
\def\maketitle{
  \pdfdaten
  \noindent
  \mytitle
  \begin{center}
    \LARGE\@title\\
    \if!\@subtitle!\else\smallskip\LARGE\@subtitle\\\fi
    \bigskip
    \if!\@author!\else\bigskip\large\@author\\\fi
    \ifnum\value{denseversion}=0
      \if!\@adressstringa!\else\bigskip\normalsize\@adressstringa\\\fi
      \if!\@email!\else\ifnum\value{authorcounter}=1\bigskip\normalsize\textit{\@email}\\\else\fi\fi
    \else
    \fi
    \if!\@dedication!\else\bigskip\normalsize{\@dedication}\\\fi
  \end{center}
  \ifnum\value{denseversion}=0\vskip 1.5cm\else\vskip0.5cm\fi}
\def\kobib#1{
  \begin{raggedright}
  \ifnum\value{denseversion}=0\else\small\fi
  \Oldbibliography{#1/kobib}
  \bibliographystyle{#1/kobib}
  \end{raggedright}
  \ifnum\value{denseversion}=0\else
      \noindent
      \if!\@authorstringc!\else
        \ifnum\authortagsused=0\ifnum\value{authorcounter}>1\normalsize\@authorstringc\\[\medskipamount]\else\fi\else\normalsize\@authorstringc\\[\medskipamount]\fi
      \fi
      \if!\@adressstringb!\else\normalsize\@adressstringb\\{}\fi
      \ifnum\authortagsused=0
        \ifnum\value{authorcounter}=1
          \if!\@email!\else\linebreak\normalsize\textit{\@email}\\{}\fi
        \else
        \fi
      \else
      \fi
  \fi}
\let\Oldbibliography\bibliography
\def\bibliography#1{
  \begin{raggedright}
  \ifnum\value{denseversion}=0\else\small\fi
  \Oldbibliography{#1}
  \end{raggedright}
  \ifnum\value{denseversion}=0\else
      \medskip
      \noindent
      \if!\@authorstringc!\else
        \ifnum\authortagsused=0\ifnum\value{authorcounter}>1\normalsize\@authorstringc\\[\medskipamount]\else\fi\else\normalsize\@authorstringc\\[\medskipamount]\fi
      \fi
      \if!\@adressstringb!\else\normalsize\@adressstringb\\{}\fi
      \ifnum\authortagsused=0
        \ifnum\value{authorcounter}=1
          \if!\@email!\else\linebreak\normalsize\textit{\@email}\\{}\fi
        \else
        \fi
      \else
      \fi
  \fi
}
\newenvironment{commentfigure}{\begin{comment}}{\end{comment}}
\newenvironment{sidewayscommentfigure}{\begin{minipage}}{\end{minipage}}
\newenvironment{displaycomment}{\begin{list}{}{\rightmargin=1cm\leftmargin=1cm}\item\sf\begin{small}\color{gray}}{\end{small}\end{list}}
\def\tocmark#1{}
\def\draftstamp#1{
  \def\tocmark##1{
    \ifnum\c@secnumdepth=0\section{##1}\fi
    \ifnum\c@secnumdepth=1\subsection{##1}\fi
    \ifnum\c@secnumdepth=2\subsubsection{##1}\fi
    \ifnum\c@secnumdepth=3\subsubsection{##1}\fi
  }
  \ifnum\value{comments}=0
    \gdef\@draft{DRAFT - Edited on \today\ by #1 - Comments are not displayed}
  \else
    \gdef\@draft{DRAFT - Edited on \today\ by #1 - Comments are displayed}
  \fi
  \fancyhead[C]{\footnotesize\tt\textcolor{red}{\@draft}}}
\def\skript{
  \renewenvironment{displaycomment}{}{}
  \ifnum\value{comments}=0
    \renewenvironment{example*}{\comment}{\endcomment}
    \renewenvironment{remark*}{\comment}{\endcomment}
  \else\fi
  \parindent=0mm        
}
\def\ul{\underline}
\def\N {\mathbb{N}}
\def\Z {\mathbb{Z}}
\def\R {\mathbb{R}}
\def\C {\mathbb{C}}
\def\h {\mathrm{H}}
\def\subset{\subseteq}
\def\supset{\supseteq}
\def\inv{\mathrm{inv}}
\renewenvironment{proof}[1][\nameProof]
  {\par\pushQED{\qed}%
   \normalfont \topsep6\p@\@plus6\p@\relax
   \trivlist
   \item[\hskip\labelsep
         \itshape
         #1\@addpunct{.}]
  \leavevmode}
  {\popQED\endtrivlist\@endpefalse}
\def\notebox#1#2{\begin{minipage}[b]{#1}\sloppy\renewcommand{\baselinestretch}{0.8}\footnotesize \begin{center}#2\end{center}\end{minipage}}
\def\mquad{\hspace{-2em}}
\newcommand{\arr}[1][r]{\ar@<0.7ex>[#1]\ar@<-0.7ex>[#1]}
\newcommand{\arrr}[1][r]{\ar@<1.4ex>[#1]\ar[#1]\ar@<-1.4ex>[#1]}
\newlength{\myeqt} % Darin wird die Länge des übergebenen Textes abgespeichert
\newlength{\myeqs} % Darin wird die Länge des übergebenen Symbolds abgespeichert
\newlength{\myeqm} % Wieviel "klein" über die Breite des Symbolds hinausgehen darf
\newlength{\myeqn} % Die Standardbreite für große Boxen
\newcommand\symtext[3][\myeqn]{
  \settowidth{\myeqt}{#2}
  \settowidth{\myeqs}{$#3$}
  \addtolength{\myeqs}{\the\myeqm}
  \ifdim\myeqt>\myeqs
    %groß
    \stackrel{\hspace{-#1}\notebox{#1}{\medskip #2 \\ $\downarrow$\smallskip}\hspace{-#1}}{#3}
  \else
    %klein
    \stackrel{\text{#2}}{#3}
  \fi}
\def\brackets#1{\IfStrEq{#1}{-}{}{(#1)}}
\def\subindex#1{\IfStrEq{#1}{-}{}{_{#1}}}
\newcommand{\alxydim}[2]{\begin{aligned}\xymatrix#1{#2}\end{aligned}}
\def\bigset#1#2{\left\lbrace\;\begin{minipage}[c]{#1}\begin{center}#2\end{center}\end{minipage}\;\right\rbrace}
\newlength{\myl}
\def\ddt#1#2#3{\left.\frac{\mathrm{d}^{\IfStrEq{#1}{1}{}{#1}}}{\mathrm{d}#2}\IfStrEq{#2}{#3}{\right.}{\right|_{#3}}}
\def\diff{\mathscr{D}\mathrm{iff}}
\def\frech{\mathscr{F}\mathrm{rech}}
\def\top{\mathscr{T}\mathrm{op}}
\def\B{\mathcal{B}\hspace{-0.03em}}
\newlength{\widthtmp}
\def\length#1{\settowidth{\widthtmp}{#1}\the\widthtmp}
\definecolor{olivegreen}{rgb}{.33,.55,.18}
\newcommand{\ie}{i.e., }
\renewcommand{\O}{\operatorname{O}}
\newcommand{\SO}{\operatorname{SO}}
\newcommand{\Spin}{\operatorname{Spin}}
\newcommand{\String}{\operatorname{String}}
\newcommand{\XString}{\mathscr{S}\mathrm{tring}}
\newcommand{\GString}{\mathcal{S}tring}
\newcommand{\U}{\operatorname{U}}
\newcommand{\PU}{\operatorname{PU}}
\newcommand{\Cl}{\operatorname{Cl}}
\newcommand{\opp}{\operatorname{op}}
\newcommand{\pr}{\operatorname{pr}}
\newcommand{\id}{\operatorname{id}}
\newcommand{\res}{\mathrm{res}}
\newcommand{\dis}{dis}
\newcommand{\Aut}{\mathrm{Aut}}
\newcommand{\Out}{\mathrm{Out}}
\newcommand{\BB}{\mathcal{B}}
\newcommand{\AUT}{\mathscr{A}\mathrm{ut}}
\newcommand{\Imp}{\mathrm{Imp}}
\newcommand{\Alg}{\Incl\mathrm{lg}}
\def\UAUT{\mathscr{U}}
\def\vNAlg{\mathrm{v}\mathscr{N}\Alg}
\newcommand{\Incl}{\mathscr{A}}
\def\quot#1{``#1''}
\def\quand{\quad\text{ and }\quad}
\def\nameProof{Proof}
\def\mathscr#1{\EuScript{#1}}
\title{A representation of the string 2-group}
\author[a]{Peter Kristel}
\email{pkristel@gmail.com}
\author[b]{Matthias Ludewig}
\email{matthias.ludewig@mathematik.uni-regensburg.de}
\author[c]{Konrad Waldorf}
\email{konrad.waldorf@uni-greifswald.de}
\keywords{}
\newcommand{\Rep}{R}
\begin{document}

        \maketitle

\begin{abstract}
We construct a representation of the string 2-group on a 2-vector space, aiming to establish it as the categorification of the spinor representation. Our model for 2-vector spaces is based on the Morita bicategory of von Neumann algebras, and we specifically represent the string 2-group on the hyperfinite type III$_1$ factor.
\end{abstract}

\mytableofcontents
\setsecnumdepth{1}

\section{Introduction}

The string group $\String(d)$, $d\geq 5$, is defined, up to homotopy equivalence, as the stage in the Whitehead tower of the orthogonal group following $\Spin(d)$,
\begin{equation*}
\cdots \to \String(d) \to \Spin(d) \to \SO(d) \to \O(d).
\end{equation*}
While $\String(d)$ can be given the structure of a topological group, it cannot be equipped with the structure of a finite-dimensional Lie group and in recent years, the insight emerged that it is geometrically most fruitful to realize $\String(d)$ as a categorified  group, or \emph{2-group} \cite{baez5}, a point of view that will be further advocated in this paper.

%\paragraph{String group models.}

Several models of the string 2-group in different contexts have been constructed, e.g., as a strict Fr\'echet Lie 2-group \cite{baez9}, as a finite-dimensional smooth \quot{stacky} 2-group \cite{pries2}, or as a strict diffeological 2-group \cite{Waldorf}. 
In this paper, using a combination of the models of \cite{baez9} and \cite{Waldorf}, we model the string 2-group as a strict Fr\'echet Lie 2-group $\XString(d)$, which, owing to its strictness, is very convenient to work with.

From the beginning, it was a major question if any model of the string 2-group would support a representation, meant to be a categorification of the spinor representation of the spin groups. 
This question is embedded in the  challenge to develop a good representation theory for general 2-groups, starting with a sensitive choice for a categorification of a vector space, a \emph{2-vector space}. 
Indeed, many proposals for 2-vector spaces have been studied from a representation-theoretic perspective: Kapranov-Voevodsky 2-vector spaces \cite{kapranov1}, see e.g., \cite{Barrett2004,Elgueta2007,Ganter2008,Ganter2014},  Baez-Crans 2-vector spaces \cite{baez7,Heredia}, or Crane-Yetter's measurable categories \cite{Crane2005,Baez2012}. 
Unfortunately, none of these frameworks for representations of 2-groups turned out to be flexible enough to contain a representation of the string 2-group, though various speculations of attempts have been reported, e.g. in \cite{Baez2012,Nikolausb,Murray2017a}.

%\paragraph{2-group representations.}

Following ideas of Schreiber \cite{Schreiber2006,Schreiber2007,Schreiber2009,schreiber2}, the work of Stolz and Teichner \cite{ST04,stolz3}, and our previous work in the finite-dimensional context \cite{Kristel2020}, in this article, we propose to use the bicategory of von Neumann algebras, bimodules, and intertwiners as a model for 2-Hilbert spaces. 
In a sense, our setting can be regarded as a \quot{non-abelian} generalization of Crane-Yetter's 2-group representations on measurable categories, as the latter ones can be seen as representation categories of \emph{abelian} von\  Neumann algebras \cite{Baez2012}.

A particular feature of our model for 2-Hilbert spaces is that the automorphism 2-group of a von Neumann algebra $A$ can be realized as a strict topological 2-group $\UAUT(A)$, the unitary automorphism 2-group of $A$, see \cref{DefCrossedModuleAUT}. 
A unitary representation of a  Lie 2-group $\mathscr{G}$ on a von Neumann algebra $A$ is then simply a  continuous 2-group homomorphism
\begin{equation*}
\mathscr{R}: \mathscr{G} \to \UAUT(A)\text{,}
\end{equation*} 
see \cref{representaion-of-discrete-2-group,continuous-unitary-representation}.

We remark that the unitary automorphism 2-group $\UAUT(A)$ of a von Neumann algebra $A$ seems \emph{not} to have the structure of a \emph{Lie} 2-group, and correspondingly we do expect unitary representations to be smooth.
This should not be a surprise, as even representations of ordinary finite-dimensional Lie groups on Hilbert spaces are typically not smooth but only strongly continuous.
 That $\UAUT(A)$ does not have a smooth structure but only the structure of a topological (in fact, polish) strict 2-group can therefore be seen as the higher-categorical analog of the fact that the unitary group $\U(H)$ of a Hilbert space $H$ in its strong topology is not a Lie group, but only a topological (in fact, polish) group.

%\paragraph{The stringor representation.}

In this paper we solve the long-standing open problem to construct a unitary representation
\begin{equation}
\label{stringor-representation}
\mathscr{R} : \XString(d) \to \UAUT(A)
\end{equation}
of the string 2-group,
where $A$ is a particular von Neumann algebra, namely the hyperfinite factor of type III$_1$.
To our best knowledge, this is the first time that a representation for any model of the string group has been established.

Our construction is based on many original ideas of Stolz and Teichner \cite{ST04,stolz3}. 
It became possible since we found suitable explicit models for both the string 2-group $\XString(d)$ and the III$_1$-factor $A$,  on the common basis of the theory of infinite-dimensional Clifford algebras and Fock spaces, developed by Araki \cite{Ara85}, Pressley-Segal \cite{PressleySegal}, Plymen-Robinson \cite{PR95}, and many others. 
This connection between higher differential geometry and von Neumann algebras was put together by the first-named author in his thesis \cite{kristel2019b}, which has been foundational to our ongoing research on  string geometry.

We call our representation $\mathscr{R}$ in \cref{stringor-representation} the \emph{stringor representation}.
This name is motivated by the idea that $\mathscr{R}$ is the higher analog of the spinor representation of the spin group. In the following we describe a result that supports this idea.

%\paragraph{Significance in string geometry.}

In spin geometry, the spinor bundle on a smooth $d$-dimensional manifold $M$ is a vector bundle obtained by applying the associated vector bundle construction to a spin structure on $M$ (a lift of the structure group of $M$ to $\Spin(d)$) and  the spinor representation of $\Spin(d)$.
When seeking
for analogous structures meeting the demands of string theory,  most successful has been the principle
established by Killingback and Witten to look at spin structures on the configuration space of strings
in $M$, the free loop space $LM = C^{\infty}(S^1,M)$ \cite{killingback1,witten2}. Such spin structures are different from
the ones mentioned above, because now all groups are infinite-dimensional. Nonetheless, Stolz and
Teichner have outlined a construction of an infinite-dimensional spinor bundle on $LM$ \cite{stolz3}. 
Moreover,
they established the principle of \emph{fusion} in loop space, expressing the idea that the relevant geometric
structures on loop space correspond to geometric structures on $M$ itself. In obvious
analogy, they coined the terminology \emph{stringor bundle} for the corresponding -- at that time, unknown -- structure on $M$.

A rigorous construction of the spinor bundle on  $LM$ and its fusion product has been given by the first- and third-named authors in a diffeological setting of \emph{rigged von Neumann algebra bundles} \cite{kristel2019b, Kristel2019,kristel2020smooth, Kristel2019c}, under the assumption that $M$ is equipped with a string structure $\mathscr{P}$, i.e., a principal $\XString(d)$-2-bundle $\mathscr{P}$ lifting the structure group of $M$ to $\XString(d)$.  These constructions have recently been simplified by the second-named author to a topological setting with locally trivial von Neumann algebra bundles \cite{ludewig2023spinor}.  
In our common article  \cite{StringRep}  we described how the fusion products allow to \emph{regress} the spinor bundle on loop space to a von Neumann 2-vector bundle $\mathscr{S}_{\mathscr{P}}$ on $M$, thus realizing the anticipated stringor bundle $\mathscr{S}_{\mathscr{P}}$ of Stolz and Teichner as a well-defined object in that setting. 

In  \cite{StringRep}, we describe a categorification of the associated vector bundle construction:  it associates to a principal $\mathscr{G}$-2-bundle $\mathscr{P}$ over a smooth manifold $M$ and a unitary representation $\mathscr{R}: \mathscr{G} \to \UAUT(A)$ on a von Neumann algebra $A$, a von Neumann 2-vector bundle $\mathscr{P} \times_{\mathscr{G}} A$ over $M$. 
There, we prove the following result:

\begin{theorem}[{{\cite{StringRep}}}]
Let $M$ be a string manifold with string structure $\mathscr{P}$.
Then, the stringor bundle $\mathscr{S}_{\mathscr{P}}$ is the associated 2-vector bundle for the string structure $\mathscr{P}$ and the stringor representation $\mathscr{R}:\XString(d) \to \UAUT(A)$, i.e., there is a canonical isomorphism of von Neumann 2-vector bundles 
\begin{equation*}
\mathscr{S}_{\mathscr{P}} \cong \mathscr{P} \times_{\XString(d)} A\text{.}
\end{equation*}
\end{theorem}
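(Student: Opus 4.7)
The plan is to establish the isomorphism locally on a good open cover of $M$ and then glue. First, I would choose an open cover $\{U_i\}$ of $M$ on which the string structure $\mathscr{P}$ is represented by a non-abelian Cech cocycle taking values in $\XString(d)$ in the sense appropriate for principal 2-bundles. Over each $U_i$, both sides trivialize: the associated 2-vector bundle $\mathscr{P}\times_{\XString(d)}A$ trivializes as $U_i\times A$ by construction, while the stringor bundle $\mathscr{S}_{\mathscr{P}}$ trivializes by unpacking its definition via regression from the loop-space spinor bundle, using local trivializations of the latter as in \cite{ludewig2023spinor}.

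Next, I would construct on each $U_i$ an explicit isomorphism $U_i \times A \to \mathscr{S}_{\mathscr{P}}|_{U_i}$ of trivial von Neumann 2-vector bundles. The crux is then to verify that, on overlaps $U_i\cap U_j$, the transition 1-morphism of $\mathscr{P}\times_{\XString(d)}A$ — obtained by applying the stringor representation $\mathscr{R}$ to the $\XString(d)$-cocycle of $\mathscr{P}$ — coincides with the transition 1-morphism of $\mathscr{S}_{\mathscr{P}}$ — obtained by regressing the transition data of the loop-space spinor bundle associated to $L\mathscr{P}$. A similar compatibility is then needed on triple overlaps, matching the 2-morphism data of the $\XString(d)$-cocycle (its associator) against the coherence isomorphisms produced by the fusion product under regression.

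The main obstacle, and the heart of the argument, is precisely this cocycle matching. It reduces to an explicit comparison between two a priori different constructions: on one hand, the action of the string 2-group on the hyperfinite III$_1$-factor $A$ as built into the definition of $\mathscr{R}$ in terms of infinite-dimensional Clifford algebras and their Fock bimodules; on the other hand, the fusion-compatible bimodule structure that governs the regression procedure from loop space to base space. The whole design of $\mathscr{R}$ — in particular its use of the Pressley-Segal/Plymen-Robinson machinery — is geared so that these two structures agree on generators, but verifying this requires tracking through the way fusion of loop-space bimodules encodes multiplication in $\XString(d)$, and how the associators on both sides match.

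Once the cocycle data are identified, gluing the local isomorphisms to a global isomorphism of von Neumann 2-vector bundles is formal, using the equivariance of $\mathscr{R}$ as a 2-group homomorphism together with standard descent for von Neumann 2-vector bundles. Independence of the isomorphism on the choice of cover and local trivializations follows because any two choices differ by coboundary data that induces the identity on associated 2-bundles up to canonical 2-isomorphism, yielding the canonical isomorphism $\mathscr{S}_{\mathscr{P}}\cong \mathscr{P}\times_{\XString(d)}A$ asserted in the theorem.
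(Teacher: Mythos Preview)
The paper does not contain a proof of this theorem. It is stated in the introduction explicitly as a result of the companion paper \cite{StringRep} (note the citation in the theorem header and the sentence ``There, we prove the following result'' immediately preceding it), and the present paper's contribution is only the construction of the representation $\mathscr{R}$ itself. There is therefore no proof here against which to compare your proposal.

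That said, your outline is a plausible high-level strategy, but it is only a strategy: the substantive content lies entirely in the step you flag as ``the main obstacle,'' namely the identification of the transition data of the regressed loop-space spinor bundle with the cocycle obtained by applying $\mathscr{R}$ to the $\XString(d)$-cocycle of $\mathscr{P}$. Nothing in your sketch indicates how this identification is actually carried out; in particular, you would need to make precise how the fusion product on the loop-space spinor bundle (which is what drives the regression) matches the Connes-fusion bimodule structure encoded in $\UAUT(A)$, and why the resulting 2-morphism data agree on triple overlaps. Without that, the proposal is a restatement of what needs to be shown rather than a proof. If you want to pursue this, the argument in \cite{StringRep} is the place to look for the actual mechanism.
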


The above theorem is our current justification for the terminology \quot{stringor representation}: it exhibits the stringor bundle as a higher-geometric analogy of the spinor bundle. 
In ongoing research we will further investigate and try to advance this analogy. 

The present paper is an excerpt and improvement of our paper \cite{StringRep}, in which our construction of the stringor representation was carried out first in a diffeological setting, together with other constructions and results partially explained above.

\paragraph{Acknowledgements. } We would like to thank Severin Bunk, Andr\'e Henriques, and Peter Teichner for helpful discussions. 
PK gratefully acknowledges support from the Pacific Institute for the Mathematical Sciences in the form of a postdoctoral fellowship. 
ML gratefully acknowledges support from SFB 1085 ``Higher invariants''.

\section{Crossed modules and their representations}
\label{sec:2groups}

In the present paper, we only consider \emph{strict} 2-groups, and treat  those via their crossed modules. In \cref{2-group-perspective} we describe the corresponding treatment as groupoids internal to the category of groups.  

\begin{definition}
\label{crossed-module}
A \emph{crossed module} is a quadruple $\mathscr{G}=(G,H,t,\alpha)$ consisting of  groups $G$ and $H$, a  group homomorphism $t: H \to G$, and a  map $\alpha: G \times H \to H$, such that $\alpha$ is an action of $G$ on $H$ by group homomorphisms, and
\begin{equation} \label{CrossedModuleActions}
t(\alpha(g,h))=gt(h)g^{-1}
\quand
\alpha(t(h),k)=hkh^{-1}
\end{equation}
hold for all $g\in G$ and $h, k\in H$. 
A \emph{strict intertwiner} $\mathscr{R}:\mathscr{G} \to \mathscr{G}'$ between crossed modules $\mathscr{G}=(G,H,t,\alpha)$ and $\mathscr{G}'=(G',H',t',\alpha')$ is a pair $\mathscr{R}=(R_0,R_1)$ consisting of  group homomorphisms $R_0:G \to G'$ and $R_1:H \to H'$  such that
\begin{equation}
\label{CompatibilityStructureMaps}
R_0 (t(h)) = t' (R_1(h))
\quand
R_1(\alpha(g,h))= \alpha'(R_0(g),R_1(h))
\end{equation} 
hold for all $h\in H$ and $g\in G$.
\end{definition}

\begin{example}
\label{ex:2groupsAbelian}
Given any abelian group $A$ we consider the  crossed module  $A \to \{e\}$, with the (necessarily trivial) action, which we denote by $\mathscr{B}A$.
Observe that $A$ must be abelian because of the second equality in \cref{CrossedModuleActions}.
\end{example}

\begin{example}
\label{ex:2-groupsOrdinaryGroups}
Any group $G$ can be viewed as a crossed module $\{e\} \to G$, which we denote by $G_{\dis}$.
\end{example}

\begin{example}
\label{ExampleAUTA}
Let $A$ be an algebra (which we take to be over $\R$ or $\C$, and require it to be unital and associative).
Its \emph{automorphism 2-group} $\AUT(A)$ is the crossed module
\begin{equation*}
    A^\times \stackrel{t}{\to} \Aut(A),
\end{equation*}
where  $t$ assigns to a unit $a \in A^\times$ the corresponding inner automorphism of $A$, while the group $\Aut(A)$ of unital algebra automorphisms of $A$ acts on  $A^\times$ by evaluation.
 \end{example}

Foundational to this article is the following terminology, based on our choice to consider algebras as 2-vector spaces.  

\begin{definition}
\label{representaion-of-discrete-2-group}
A representation of a crossed module $\mathscr{G}$ on an algebra $A$ is a strict intertwiner 
\begin{equation*}
\mathscr{R}: \mathscr{G} \to \AUT(A)\text{.}
\end{equation*}
\end{definition}

\begin{remark}
\label{remark-about-automorphisms}
We proved in \cite[Prop. 2.3.1]{Kristel2022} that $\AUT(A)$ is the automorphism 2-group of the object $A$ in the Morita bicategory of algebras, if $A$ is Picard-surjective. 
In \cite[Prop. A.2]{Kristel2022} we proved that every algebra is isomorphic (in the Morita bicategory) to a Picard-surjective one. Thus,  \cref{representaion-of-discrete-2-group} is the natural definition of a representation on an object of a bicategory.  
\end{remark}

Most of the time we consider crossed modules  in the context of topological groups (\quot{topological crossed modules}) or (possibly Fr\'echet) Lie groups (\quot{Lie crossed modules}). 
In both cases it is straightforward to adapt \cref{crossed-module} to that setting: 
one requires the groups $G$ and $H$ to be topological groups (respectively Lie groups) and all structure maps to be continuous (respectively smooth).
Every Lie crossed module has an underlying topological crossed module, obtained by forgetting the smooth structure.

For instance, the crossed modules $\mathscr{B}A$ and $G_{\dis}$ are topological crossed modules if $A$ and $G$ are topological groups, and if $A$ is a finite-dimensional algebra, then $A^\times$ and $\Aut(A)$ have natural Lie group structures, which turn $\AUT(A)$ into a Lie crossed module.
Our \cref{representaion-of-discrete-2-group} thus generalizes immediately to continuous (smooth) representations of topological (Lie) crossed modules.  

Another  extension of \cref{representaion-of-discrete-2-group} is to $\ast$-algebras. If $A$ is a  $*$-algebra, its \emph{unitary automorphism 2-group} $\UAUT(A)$ is described by the crossed module
\begin{equation*}
   \U(A) \stackrel{t}{\to} \Aut^{*}(A),
\end{equation*}
where $\U(A)$ is the group of unitary elements in $A$ (i.e., those $u \in A$ with $uu^* = u^*u = 1$) and $\Aut^{*}(A)$  denotes the group of $*$-automorphisms.
The structure maps are just the restrictions of those in \cref{ExampleAUTA}.
A unitary representation is then a strict intertwiner $\mathscr{R}: \mathscr{G} \to \mathscr{U}(A)$. If $A$ is finite-dimensional, then $\mathscr{U}(A)$ is again a Lie crossed module, and we can look at smooth or continuous unitary representations.

\begin{comment}
We may also impose further restrictions on the topology of our crossed modules; for instance, consider crossed modules of Polish groups, of delta-generated groups, or of compactly-generated weak Hausdorff groups. 
\end{comment}

Just in the way that algebras are considered to be 2-vector spaces, it makes sense to consider %\cstar-algebras as \quot{inner product 2-vector spaces}, and 
von Neumann algebras as \quot{2-Hilbert spaces}. 
We will be interested in unitary representations on von Neumann algebras. 
In the following, we will adapt the above definition of the unitary automorphism 2-group $\UAUT(A)$ to von Neumann algebras, in particular taking care for the topological aspects.

We suppose that $A$ is a von Neumann algebra, which we assume to be representable on a separable Hilbert space.
We denote by $\U(A)$ its unitary group and, as before,  by $\Aut^{*}(A)$ the group of $*$-automorphisms of $A$. 
\begin{comment}
These automorphisms are automatically ultraweakly continuous Theorem 7.1.12 of Kadison-Ringrose
\end{comment}
In particular, inner automorphisms obtained by conjugation with a unitary $u \in \U(A)$ lie in $\Aut^*(A)$. 
The groups $\Aut^{*}(A)$ and $\U(A)$ have canonical topologies turning them into topological groups.
The group $\Aut^{*}(A)$ carries Haagerup's $u$-topology \cite[\S3]{HaagerupStandardForm}, which is the restriction of the topology on the set $\BB(A)$ of bounded operators on $A$ that is induced by the collection of seminorms
\begin{equation} \label{Definitionutopology}
        \| \theta \|_\xi = \|\xi \circ \theta \|_{A_*} = \sup_{\|a\|=1} |\xi (\theta(a))|,\qquad \xi \in A_*,
\end{equation}
on $A$ where $A_*$ denotes the predual of $A$.
The group $\U(A)$ carries the ultraweak topology.
The map $\U(A) \to \Aut^{*}(A)$ assigning to $u\in \U(A)$ the inner automorphism $a \mapsto uau^{*}$, as well as the evaluation action of $\Aut^{*}(A)$ on $\U(A)$ are continuous with respect to this topology. 
Now we are in position to introduce the following definitions.

\begin{definition}
\label{DefCrossedModuleAUT}
The \emph{unitary automorphism 2-group} $\UAUT(A)$ of a von Neumann algebra $A$ is the topological crossed module
\begin{equation*}
  \U(A) \stackrel{t}{\to} \Aut^{*}(A),
\end{equation*}
where $t$ assigns to $u \in \U(A)$ the inner automorphism given by conjugation with $u$ and $\Aut^{*}(A)$ acts on $\U(A)$ by evaluation.
\end{definition}

\begin{definition}
\label{continuous-unitary-representation}
A \emph{unitary representation} of a Lie or topological crossed module $\mathscr{G}$ on a von Neumann algebra $A$ is a continuous strict intertwiner 
\begin{equation*}
\mathscr{R}: \mathscr{G} \to \UAUT(A)\text{.}
\end{equation*}
\end{definition}

\begin{remark} \label{RemarkAutomorphism2GroupBimodules}
Analogously to \cref{remark-about-automorphisms},  a von Neumann algebra $A$ is an object in the bicategory $\vNAlg$ of von Neumann algebras, bimodules, and intertwiners, with the Connes fusion product as the composition of 1-morphisms, see \cite{Brouwer2003}.
  Hence, as an object in a bicategory, $A$ has an automorphism 2-group $\AUT_{\vNAlg}(A)$, here viewed as a monoidal category, whose objects are all $A$-$A$-bimodules $M$ that are invertible with respect to the Connes fusion product.
 When also $\UAUT(A)$ is viewed as a monoidal category, there is a canonical monoidal functor   $\UAUT(A) \to \AUT_{\vNAlg}(A)$ sending an automorphism $\theta\in \AUT^{*}(A)$ to $L^2(A)_\theta$, the  $\theta$-twisted standard form of $A$.
  For a general von Neumann algebra, this functor  is   fully faithful.
  \\
  If $A$ is a factor of type III (as later in \cref{SectionOperatorModels}), the functor $\UAUT(A) \to \AUT_{\vNAlg}(A)$ is moreover essentially surjective, and hence an equivalence of monoidal categories. 
  Indeed, let $M$ be an invertible $A$-$A$-bimodule. 
  By the Murray-von Neumann classification of type III factors, there is, up to isomorphism, a unique non-zero countably generated left $A$-module, hence as a left-module, $M$ must be isomorphic to the underlying left module of a standard form $L^2(A)$ for $A$. 
  That $M$ is invertible implies that the homomorphism $A^{\opp} \to A^\prime$ provided by the right action of $A$ on $M$ must by an isomorphism.
  Comparing the right action induced by the modular conjugation of $L^2(A)$ with the right action therefore provides an automorphism $\theta \in \Aut^{*}(A)$ such that $M \cong L^2(A)_\theta$. 
  Summarizing, for a type III factor $A$, the automorphism 2-group $\UAUT(A)$ is a strict and topological version of the automorphism 2-group of $A$ as an object in the bicategory $\vNAlg$. 
\end{remark}

  If $\mathscr{G}$ is a topological crossed module,  we define the topological groups   $\pi_1\mathscr{G} := \mathrm{ker}(t)$, equipped with the subspace topology, and  $\pi_0\mathscr{G} := G/t(H)$, equipped with the quotient topology. Every continuous strict intertwiner  $\mathscr{R}: \mathscr{G} \to \mathscr{G}^\prime$  induces continuous group homomorphisms $\pi_0\mathscr{R}: \pi_0\mathscr{G} \to \pi_0\mathscr{G}^\prime$  and $\pi_1 \mathscr{R}: \pi_1\mathscr{G} \to \pi_1\mathscr{G}^\prime$. 
 In particular, any unitary representation $\mathscr{R}: \mathscr{G}\to \UAUT(A)$ induces continuous group homomorphisms $\pi_1\mathscr{R}: \pi_1\mathscr{G} \to Z(\U(A)) = \U(Z(A))$ (in particular, a unitary representation of $\pi_1\mathscr{G}$ on $Z(A)$) and $\pi_0\mathscr{R}:\pi_0\mathscr{G} \to \mathrm{Out}^{*}(A)$ (an \quot{outer representation} of $\pi_0\mathscr{G}$ on $A$). 
 
We may  consider the topological crossed modules $\mathscr{B}\pi_1\mathscr{G}$ and $(\pi_0\mathscr{G})_{dis}$ , and the  sequence
\begin{equation}
\label{eq:extension}
   \mathscr{B} \pi_1\mathscr{G} \to \mathscr{G} \to (\pi_0\mathscr{G})_{dis}
\end{equation} 
of strict intertwiners given by inclusion and projections, respectively
%
\begin{comment}
Set-theoretically, this sequence is an extension of 2-groups in the sense of Vitale \cite{Vitale2002}.
It is not \quot{strictly exact} in the sense of \cite{baez9}, but we are sure that \cref{eq:extension} must be exact in a non-strict sense, for which we have not found a reference.
\end{comment}
%
The action $\alpha$  induces an action of $\pi_0\mathscr{G}$ on $\pi_1\mathscr{G}$, and following \cite{pries2}  we call $\mathscr{G}$ and the corresponding extension \cref{eq:extension} \emph{central} if that action vanishes. In case of the unitary automorphism group of a von Neumann algebra $A$ (\cref{DefCrossedModuleAUT}), we have $\pi_1\UAUT(A)=Z(\U(A))=\U(Z(A))$, the group of central unitaries, and $\pi_0\UAUT(A)=\Out^{*}(A) := \Aut^{*}(A)/\U(A)$. Moreover, if $A$ is a factor, then $\pi_1\UAUT(A)=\U(1)$ and $\UAUT(A)$ is central.

\begin{remark}
Baez-Lauda have classified non-continuous, i.e., purely set-theoretic, extensions of the form \cref{eq:extension} by the group cohomology $\h^3_{\mathrm{grp}}(\pi_0\mathscr{G},\pi_1\mathscr{G})$ of $\pi_0\mathscr{G}$ with values in the module $\pi_1\mathscr{G}$ \cite{baez5}.  The class $k_{\mathscr{G}}$ corresponding to a  crossed module $\mathscr{G}$ is called the \emph{k-invariant} of $\mathscr{G}$.
For a continuous central crossed module $\mathscr{G}$ such that $\pi_0\mathscr{G}$ is Hausdorff, paracompact and locally compact, 
extensions of the form \cref{eq:extension} are classified by continuous Segal-Mitchison cohomology $\h^3_{\mathrm{SM}}(\pi_0\mathscr{G},\pi_1\mathscr{G})$ \cite{Blanco2020}. 
Depending on $A$, the topology of $\pi_0\UAUT(A)=\Out^{*}(A)$ can be very bad, for instance, non-Hausdorff. 
This happens, in particular, for the hyperfinite type III factor we use later in \cref{SectionOperatorModels}, where the quotient topology on $\Out^{*}(A)$ is indiscrete. 
Hence, we  cannot to use this  k-invariant of the unitary automorphism group $\UAUT(A)$.
\end{remark}

\section{The string 2-group}
\label{sec:string2group}

We model the string 2-group $\XString(d)$ as a crossed module of nuclear Fr\'echet Lie groups. 
It is a variation of (and equivalent to) a crossed module that appeared first in \cite{baez9}; see \cite{LudewigWaldorf2Group} for a detailed discussion.

To start with, for a Lie group $G$ with identity element $e$, we denote by $LG = C^\infty(S^1, G)$ the smooth loop group of $G$, and for subsets $I \subset S^1$, we write $L_I G$ for subgroup of $\gamma \in LG$ such that $\gamma(t) = e$ for $t \notin I$, respectively $t \in [0, \pi]$.
Moreover, we denote by $P_e G$ the space of smooth paths $\gamma : [0, \pi] \to G$ that are \emph{flat} at the end points, i.e., all derivatives of $\gamma$ at $t=0, \pi$ vanish.
We note that the elements of $L_{[0, \pi]} G$ are always flat at $t=0, \pi$; hence, restriction gives an injective group homomorphism 
\begin{equation}
\label{InclusionLoopsPaths}
r : L_{[0, \pi]} G \to  P_e G, \qquad \gamma \longmapsto \gamma|_{[0, \pi]}.
\end{equation}
All  groups discussed above are infinite-dimensional Lie groups modeled on nuclear Fr\'echet spaces.

Let $G$ be a compact, simple and simply connected Lie group and let
\begin{equation}
        \label{eq:bce}
        1\to \U(1) \to \widetilde{LG} \stackrel{\pi}{\to} LG \to 1
\end{equation}
be a central extension of the loop group $LG$.
By this we mean a sequence of Lie groups that is exact as a sequence of groups, and where $\pi$ is a principal $\U(1)$-bundle over $LG$. 
By our assumptions on $G$, such central extensions are classified up to \emph{unique} isomorphism by their Chern class  \cite[\S2.2 \& Lemma~2.3.1]{LudewigWaldorf2Group}.
\begin{comment}
(Here an isomorphism of central extensions is a $\U(1)$-equivariant isomorphism of Lie groups preserving the projection to $LG$.)
\end{comment}
A central extension is called \emph{basic} if its Chern class is a generator of $H^2(LG, \Z) \cong \Z$.
In the following we consider the group $G=\Spin (d)$ and a basic central extension
\begin{equation*}
1 \to \U(1) \to \widetilde{L\Spin(d)} \stackrel\pi\to L\Spin(d) \to 1\text{.}
\end{equation*}
For the next definition, but also to be used later, we introduce for paths $\gamma_1, \gamma_2 \in P_e\Spin(d)$ with a common end point, i.e., $\gamma_1(\pi)=\gamma_2(\pi)$, the notation $\gamma_1\cup\gamma_2\in L\Spin(d)$, defined by concatenation of $\gamma_1$ with the reverse of $\gamma_2$:
\begin{equation}
\label{DeltaGamma}
(\gamma_1\cup\gamma_2)(t) = \begin{cases} \gamma_1(t)  & t \in [0, \pi] \\ \gamma_2(2 \pi - t) & t \in [\pi, 2\pi]. 
\end{cases}
\end{equation} 
Moreover, we will use the notation $\Delta\gamma := \gamma\cup\gamma$.

\begin{definition}
\label{DefinitionString2Group}
The \emph{string 2-group} $\XString(d)$ is the crossed module 
\begin{equation}
\label{TMapStringGroup}
 t : \widetilde{L_{[0, \pi]}\Spin(d)} \to P_e\Spin(d),
\end{equation}where $\widetilde{L_{[0, \pi]}\Spin(d)}$ is the restriction of the basic central extension to $L_{[0, \pi]} \Spin(d)$ and $t  := r \circ \pi$ is the composition of the  projection with the restriction map \eqref{InclusionLoopsPaths}.
The crossed module action $\alpha$ is given by
\begin{equation}
\label{CrossedModuleActionStringGroup}
  \alpha(\gamma, \Phi) = \widetilde{\Delta\gamma} \,\Phi\, \widetilde{\Delta\gamma}^{-1}, \qquad \gamma \in P_e\Spin(d), ~~\Phi \in \widetilde{L_{[0, \pi]}\Spin(d)},
\end{equation} 
where $\widetilde{\Delta\gamma}$ is any lift of the loop $\Delta\gamma \in L\Spin(d)$. 
\end{definition}

The action $\alpha$ is well-defined as any two lifts of $\Delta \gamma$ differ by a central element $z \in \U(1)$, and smooth because the central extension has smooth local sections.
Verification of the second identity of \cref{CrossedModuleActions} for this action (the ``Peiffer identity'') uses \cite[Lemma~3.2.2]{LudewigWaldorf2Group} that any central extension of $L\Spin(d)$ has the property of being \emph{disjoint commutative} \cite[Corollary 2.4.4]{LudewigWaldorf2Group}, which means that elements $\Phi, \Psi \in \widetilde{L\Spin(d)}$ commute if $\pi(\Phi)$ and $\pi(\Psi)$ have disjoint supports.

The  groups $\pi_0\XString(d)$ and $\pi_1\XString(d)$ are actually finite-dimensional Lie groups:
\begin{equation*}
\pi_0\XString(d) = \Spin(d)
\quand
\pi_1\XString(d) = \U(1)\text{.}
\end{equation*}
Thus, the string 2-group is an extension
\begin{equation*}
\mathscr{B}\!\U(1) \to \XString(d) \to\Spin(d)_{\dis}\text{.}
\end{equation*}
The fact that the universal central extension is central implies that this extension is central, too.
The condition that it is basic ensures that -- after taking geometric realizations -- $\XString(d)$ a the 3-connected cover of $\Spin(d)$ \cite[Theorem 3.4.2]{LudewigWaldorf2Group}, and hence, by definition, a valid string group.

\begin{remark}
Our model for the string 2-group, \cref{DefinitionString2Group},  depends on the choice of a basic central extension of $L\Spin(d)$, i.e., on the choice of a generator for $H^2(L\Spin(d), \Z)\cong \Z$.
Replacing a generator by its negative, we obtain another crossed module, $\XString(d)^\prime$, which is canonically isomorphic to $\XString(d)$ via a strict intertwiner  that preserves the projection to $\Spin(d)_{\mathrm{dis}}$, but acts as inversion on $\mathscr{B}\U(1)$.
Hence, the two crossed modules $\XString(d)$ and $\XString(d)^\prime$ are isomorphic as crossed modules over $\Spin(d)_{\mathrm{dis}}$, but \emph{not} as $\mathscr{B}\U(1)$-central extensions of $\Spin(d)_{\mathrm{dis}}$.
One way to resolve this sign issue is to insist that the generator of $H^2(L\Spin(d), \Z) \cong H^3(\Spin(d), \Z)$ defining the basic central extension is represented by the left-invariant 2-form $\overline{\sigma}$ corresponding to the Lie algebra cocycle $\sigma \in H^3(\mathfrak{spin}(d), \Z)$ given by
\begin{equation*}
  \sigma(x, y, z) = \langle [x, y], z\rangle
\end{equation*}
for a \emph{positive definite} left-invariant inner product on $\mathfrak{spin}(d)$.
By \cite[Prop.~4.4.4]{PressleySegal}, this corresponds to the requirement that this Chern class is the left-invariant 2-form on $L\Spin(d)$ determined by the Lie algebra cocycle
\begin{equation} \label{BasicCocycle}
  \omega(X, Y) = \frac{1}{2\pi i} \int_{S^1} \langle X(t), Y^\prime(t) \rangle dt
\end{equation}
on $L \mathfrak{spin}(d)$.
\end{remark}

\section{The implementer model}
\label{SectionOperatorModels}

In this section, we describe a specific model for the basic central extension of $L\Spin(d)$ that we will use in the construction of $\XString(d)$ in \cref{DefinitionString2Group}, as well as a closely related model for the hyperfinite type III$_1$ factor on which $\XString(d)$ will be represented.
We write
\begin{equation}
\label{H}
  H := L^2(S^1, \R^d)
\end{equation}
for the space of $\R^d$-valued square-integrable functions on $S^1$ and we let $H^\C$ be its complexification.
Along the projection $L\Spin(d) \to L\SO(d)$, elements $\gamma$ of $L\Spin(d)$ act orthogonally on $H$ by pointwise multiplication. We denote by $\omega(\gamma)$ the corresponding element of $\O(H)$.

We recall that a complex subspace $L \subset H^\C$ is called \emph{Lagrangian} if $\overline{L} = L^\perp$, equivalently, if $L \oplus \overline{L} = H^\C$.
A specific example for such a Lagrangian is the space
\begin{equation*}
  L := \overline{\{ e^{i(n+1/2)t} \mid n =0, 1, 2, \dots\}}.
\end{equation*}
The \emph{restricted orthogonal group} $\O_{\mathrm{res}}(H)$ (with respect to this choice of $L$) is the subgroup consisting of those $g \in \O(H)$ whose  commutator $[g, P_L]$ with the orthogonal projection $P_L$ onto $L$ is a Hilbert-Schmidt operator.
It turns out that the elements $\omega(\gamma) \in \O(H)$, $\gamma \in L\Spin(d)$, are actually contained in $\O_{\res}(H)$ \cite[Lemma 3.22]{Kristel2019};
hence we obtain a group homomorphism
\begin{equation*}
    \omega: L\Spin(d) \to \O_{\mathrm{res}}(H).
\end{equation*}
The group $\O_{\mathrm{res}}(H)$ has the structure of a Banach Lie group for which this homomorphism is smooth, see \cite[\S 3.4, Prop. 3.23]{Kristel2019}.

We will construct a central extension of $L\Spin(d)$ by pulling back a certain central extension of $\O_{\mathrm{res}}(H)$. 
This central extension of $\O_{\mathrm{res}}(H)$ is defined in terms of the Fock representation associated to $L$ for the Clifford algebra of $H$.
To describe this, let $\Cl(H)$ be the algebraic Clifford algebra of $H$, which is the universal complex unital algebra generated  by $H^\C$ and subject to the relation
\begin{equation*}
  v \cdot w + w \cdot v = - 2 \langle \overline{v}, w\rangle, \qquad v, w \in H^\C.
\end{equation*}
The Clifford algebra $\Cl(H)$ carries a unique $*$-operation such that $v^* = - \overline{v}$ for $v \in H^\C$, turning it into a $*$-algebra.
We define the  \emph{Fock space} of $L$ by 
\begin{equation*}
\mathfrak{F} := \overline{\Lambda L} = \overline{\bigoplus_{k=0}^\infty \Lambda^k L}\text{,}
\end{equation*}
 \ie the Hilbert space closure of the algebraic exterior algebra on $L$.
The Fock space carries a canonical $*$-representation 
 \begin{equation}
\label{StarRepresentationCliffordAlgebra}
\pi: \Cl(H) \to \BB(\mathfrak{F})
\end{equation}
of the Clifford algebra $\Cl(H)$, fully determined on the generating set $H \subset \Cl(H)$ by the two properties 
 \begin{equation*}
\begin{aligned}
  \pi(v) \xi &= v \wedge \xi & & v \in L, ~\xi \in \mathfrak{F} \\
  \pi(\overline{v})\xi &= - \pi(\overline{v})^*\xi & & v \in H, ~\xi \in \mathfrak{F}.
\end{aligned}
\end{equation*}
The Fock space $\mathfrak{F}$ has a $\Z_2$-grading given by its decomposition in even and odd degree forms, and the $*$-representation $\pi$ is grading-preserving.
For details, see, e.g., \cite[\S3]{Kristel2019} or \cite[\S3]{LudewigClifford}.

By the universal property of the Clifford algebra, any $g \in \O(H)$ induces an automorphism $\Cl_g$ of $\Cl(H)$, called the \emph{Bogoliubov automorphism} associated to $g$.
An element $g \in \O(H)$ is \emph{implementable}  on $\mathfrak{F}$ if $\Cl_g$ extends along the representation $\pi$ to an automorphism of $\BB(\mathfrak{F})$. 
Since that automorphism is necessarily inner, $g$ is implementable if and only if  there exists $U \in \U(\mathfrak{F})$ (an \emph{implementer for} $g$) such that 
\begin{equation}
\label{ImplementerRelation}
\pi(\Cl_g(a)) = U\pi(a)U^{*}\qquad \forall a\in \Cl(H)\text{.}
\end{equation}
It is a classical result, see \cite[Theorem 3.3.5]{PR95} or \cite[Theorem 6.3]{Ara85}, that $g$ is implementable  if and only if $g$ lies in $\O_{\mathrm{res}}(H)$; moreover, any two implementers for $g$ differ by an element of $\U(1)$. 
In particular, the extension of $\Cl_g$ to an automorphism of $\mathcal{B}(\mathfrak{F})$ is unique, if it exists. We denote that unique extension of $\Cl_g$, for $g\in \O_{\mathrm{res}}(H)$, by $\overline{\Cl}_g$.
The set of implementers, i.e., unitaries satisfying \eqref{ImplementerRelation} for \emph{some} $g \in \O_{\mathrm{res}}(H)$, form a subgroup $\Imp(H) \subset \U(\mathfrak{F})$.
One can show that there exists a Banach Lie group structure on $\Imp(H)$, in such a way that it forms a central extension
\begin{equation}
\label{ImplementerExtensionSeq}
 1\to  \U(1) \to \Imp(H) \to \O_{\res}(H)\to 1
\end{equation}
of Banach Lie groups \cite[\S3.5]{Kristel2019}. 

The groups just discussed fit into a commutative diagram
\begin{equation}
\label{ImplementerExtension}
\begin{aligned}
\xymatrix{
  \Imp(H) \ar@{^(->}[r] \ar[d] & \U(\mathfrak{F}) \ar[d] \\
  \O_{\res}(H) \ar[r] & \PU(\mathfrak{F})
}
\end{aligned}
\end{equation}
of groups, where the bottom horizontal map assigns to an orthogonal transformation $g$ the class of an implementer $U$ satisfying \eqref{ImplementerRelation}.
It follows from \cite[Prop.~3.18]{Kristel2019} that the Lie group topology on $\Imp(H)$ is strictly finer than the strong topology induced from $\U(\mathfrak{F})$; in other words, we obtain that the top horizontal map in \eqref{ImplementerExtension}, i.e., the inclusion, is continuous (for the strong topology on $\U(\mathfrak{F})$).
As the left vertical map admits smooth local sections  and the right vertical map is continuous, this implies that also the bottom horizontal map of \eqref{ImplementerExtension} is continuous.
It follows that \eqref{ImplementerExtension} is in fact a pullback diagram in the category of topological groups.
%
\begin{comment}
It is clearly a set-theoretic pullback.
Now if $\Imp(H)^\prime$ denotes the pullback in the category of topological groups, $\Imp(H)^\prime$ is in particular a central $\U(1)$-extension of $\O_{\res}(H)$, and from the universal property of the pullback, we obtain a continuous map $\alpha : \Imp(H) \to \Imp(H)^\prime$ which is an isomorphism of groups.
After pulling back the topology on $\Imp(H)^\prime$ along $\alpha$, we obtain two topologies on $\Imp(H)$ that turn $\Imp(H)$ into a $\U(1)$-principal bundle, with one finer than the other.
But since both are principal bundles, over small open sets  $O \subset \O_{\res}(H)$, they must both look like the product $O \times \U(1)$, hence the topologies coincide.
\end{comment}
%

\begin{lemma} 
\label{LemmaPullbackBasic}
Let $d\geq 5$. 
The central extension of $L\Spin(d)$, 
\begin{equation}
\label{PullbackExtension}
\begin{aligned}
  \xymatrix{
    \widetilde{L\Spin(d)} \ar@{-->}[r] \ar@{-->}[d] & \Imp(H) \ar[d] \ar[r] & \U(\mathfrak{F}) \ar[d]  \\
    L\Spin(d) \ar[r]_{\omega} & \O_{\res}(H) \ar[r] & \PU(\mathfrak{F})\text{,}
  }
\end{aligned}
\end{equation}
defined as the pullback of the central extension \eqref{ImplementerExtensionSeq} along $\omega$ is basic.
\end{lemma}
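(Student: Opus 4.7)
The strategy is to reduce to a computation of Lie algebra 2-cocycles. Since $\Spin(d)$ with $d \geq 5$ is compact, simple, and simply connected, central extensions of $L\Spin(d)$ by $\U(1)$ are classified up to unique isomorphism by their Chern class in $H^2(L\Spin(d), \Z) \cong \Z$, and ``basic'' means that this class is a generator. The pullback diagram in \cref{PullbackExtension} produces a smooth central $\U(1)$-extension of $L\Spin(d)$ (using the pullback statement for $\Imp(H)$ recorded above together with the smoothness of $\omega: L\Spin(d) \to \O_{\mathrm{res}}(H)$); call its Chern class $k \in \Z$. I would prove $|k|=1$ by passing to the Lie algebra 2-cocycle, which represents this Chern class under the standard transgression isomorphism identifying $H^2(L\Spin(d), \Z)\otimes \R$ with a line in $H^2_{\mathrm{Lie}}(L\mathfrak{spin}(d), \R)$.

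The first concrete step is to identify the Lie algebra cocycle $c_{\Imp}$ on $\mathfrak{o}_{\mathrm{res}}(H)$ corresponding to the implementer extension. This is the classical cocycle arising from the Fock representation of $\Cl(H)$: at the level of skew-adjoint operators on $H^\C$ it is given by a Schwinger-type expression involving the trace of the off-diagonal blocks of $X$ and $Y$ with respect to the polarization $H^\C = L \oplus \overline{L}$, and its existence and explicit form are due to \cite{PressleySegal,PR95}. The second step is to pull this cocycle back along the Lie algebra map $d\omega: L\mathfrak{spin}(d) \to \mathfrak{o}_{\mathrm{res}}(H)$, which sends $X \in L\mathfrak{spin}(d)$ to the operator of pointwise multiplication by $X(t) \in \mathfrak{so}(d)$ on $L^2(S^1, \R^d)$. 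Expanding $X, Y$ in the Fourier basis adapted to $L = \overline{\mathrm{span}}\{e^{i(n+1/2)t} \mid n\geq 0\}$ and evaluating the resulting trace produces an expression of the form $\lambda \cdot \omega(X, Y)$ with $\omega$ the cocycle of \cref{BasicCocycle} and some $\lambda \in \R$; integrality of the Chern class forces $\lambda \in \Z$.

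The final and most delicate step is to pin down $|\lambda|=1$. The cleanest route is to restrict to a principal $SU(2)$-subgroup of $\Spin(d)$ associated to a long root of $\mathfrak{spin}(d)$ (available since $d \geq 5$). On this subgroup the chosen normalization of the inner product on $\mathfrak{spin}(d)$ entering \cref{BasicCocycle} reduces the problem to computing the level of the projective $LSU(2)$-representation induced on the relevant subspace of $\mathfrak{F}$; this is the classical fermionic construction of the basic (level one) positive-energy representation of $LSU(2)$. Alternatively one can directly invoke the standard theorem that the fermionic Fock representation of $L\Spin(d)$ realizes the basic representation. The main obstacle is bookkeeping: tracking signs, factors of $2\pi i$, and the precise conventions relating $c_{\Imp}$, the trace on $\mathfrak{o}_{\mathrm{res}}(H)$, and the inner product on $\mathfrak{spin}(d)$ through the cocycle calculation. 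Once the normalizations are matched, integrality closes the argument, and the sign ambiguity $\lambda = \pm 1$ is irrelevant by the remark preceding the lemma, since either choice is a basic central extension.
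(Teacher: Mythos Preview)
Your approach is correct but takes a more computational route than the paper. The paper's proof is purely topological and two lines long: it cites that for $d \geq 5$ the map $\omega : L\Spin(d) \to \O_{\res}(H)$ induces an isomorphism on $H^2$ (Pressley--Segal), and that the Chern class of $\Imp(H)$ is already known to be a generator of $H^2(\O_{\res}(H), \Z) \cong \Z$; pulling back a generator along an isomorphism gives a generator, so the extension is basic. Your Lie algebra cocycle computation is then mentioned only as a supplementary remark (with a citation), not as part of the argument. What the paper's approach buys is brevity and the avoidance of exactly the normalization bookkeeping you flag as delicate; what your approach buys is an explicit identification of the cocycle with \eqref{BasicCocycle}, which is strictly more information and is useful if one cares about the sign convention discussed in the remark preceding the lemma.
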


\begin{proof}
By the assumption $d \geq 5$, the homomorphism $\omega$ induces an isomorphism on $H^2$ (this follows, e.g., from \cite[Prop.~12.5.2]{PressleySegal}) and the Chern class of $\Imp(H)$ is classically known to be a generator of $H^2(\O_{\mathrm{res}}(H), \Z) \cong \Z$  \cite[Prop.~1.2 (iv)]{SW}.
This implies the statement.
In fact, one can show (see \cite[Thm.~3.26]{Kristel2019}) that the Lie algebra cocycle defining the extension is precisely the cocycle \eqref{BasicCocycle}.
\end{proof}

We call the central extension of \cref{LemmaPullbackBasic} the \emph{implementer model} for the basic central extension of $L\Spin(d)$. 
It will be essential for our construction of the stringor representation in \cref{sec:theStringorRep} that we use the implementer model in the definition of the string 2-group $\XString(d)$  (see \cref{DefinitionString2Group}).

Next we define the von Neumann algebra $A$ on which we represent $\XString(d)$.
We write
\begin{equation*}
\begin{aligned}
   H_0 := \{f \in H \mid \mathrm{supp}(f) \subset [0, \pi]\}\subset H
\end{aligned} 
\end{equation*}
for the Hilbert space of functions with support in $[0, \pi]$.
Its algebraic Clifford algebra $\Cl(H_0)$ is a subalgebra of the algebraic Clifford algebra $\Cl(H)$.
Restricting the representation \eqref{StarRepresentationCliffordAlgebra} to this subalgebra, we define the von Neumann algebra
\begin{equation}
\label{OurRepresentationSpace}
  A := \pi(\Cl(H_0))^{\prime\prime} \subset \BB(\mathfrak{F}),
\end{equation}
\ie the bicommutant in $\BB(\mathfrak{F})$.
Equivalently, by von Neumann's bicommutant theorem, $A$ is the closure of $\pi(\Cl(H_0))$ in either the weak or strong operator topology.

We also consider the orthogonal complement $H_0^{\perp}= \{f \in H \mid \mathrm{supp}(f) \subset [\pi, 2\pi]\}$ of $H_0$, and the corresponding von Neumann algebra
\begin{equation*}
  A_\perp := \pi(\Cl(H_0^\perp))^{\prime\prime} \subset \BB(\mathfrak{F})\text{.}
\end{equation*}
Since the subalgebras $\Cl(H_0^\perp)$ and $\Cl(H_0)$ of $\Cl(H)$ super commute, it follows that also the von Neumann subalgebras $A$ and $A_\perp$ of $\BB(\mathfrak{F})$ super commute. 
However, we have the following, more precise result, called the \emph{twisted duality} property of the Clifford von Neumann algebra.

\begin{theorem}[{{\cite{BLJ02}}}]
\label{ThmTwistedDuality}
$A$ and $A_\perp$ are each other's super commutant. 
In other words, we have
\begin{equation*}
 A_\perp = \bigl\{a \in \BB(\mathfrak{F}) \mid \forall b \in A: ab = (-1)^{|a||b|} ba\bigr\}
\end{equation*}
and the same equation with the roles of $A$ and $A_\perp$ exchanged.
\end{theorem}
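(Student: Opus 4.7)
The plan is to verify the easy inclusion at the algebraic level and extend by weak continuity, then handle the hard inclusion via the Klein transform, which converts twisted commutation into ordinary commutation and reduces the problem to standard Tomita-Takesaki duality for the Fock representation.

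For the easy direction, I would show that $A_\perp$ is contained in the super commutant of $A$. Take $v \in H_0$ and $w \in H_0^\perp$. Since $\mathrm{supp}(v) \subset [0,\pi]$ and $\mathrm{supp}(w) \subset [\pi, 2\pi]$ overlap only on a null set, $\langle \overline{v}, w\rangle = 0$, and the Clifford relation gives $v \cdot w + w \cdot v = 0$ in $\Cl(H)$. Extending by the product structure, odd-odd products super commute across $\pi(\Cl(H_0))$ and $\pi(\Cl(H_0^\perp))$. The $\Z_2$-grading operator $\Gamma$ on $\mathfrak{F}$ (acting as $(-1)^{k}$ on $\Lambda^k L$) is bounded and implements the grading via conjugation; since weak operator limits commute with this conjugation, super commutation of the algebraic generators passes to their weak closures, yielding $A_\perp \subset A^{\mathrm{sc}}$ and symmetrically $A \subset A_\perp^{\mathrm{sc}}$.

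For the hard direction, introduce the Klein transform $Z = \tfrac{1}{\sqrt{2}}(1 + i\Gamma)$. A direct check shows that for a homogeneous element $b$, conjugation by $Z$ intertwines the super commutation relation with the ordinary one: if $\widetilde{A}_\perp := Z A_\perp Z^*$ (applied with appropriate grading twist), then the statement $A^{\mathrm{sc}} \subset A_\perp$ becomes equivalent to the ordinary duality $A' \subset \widetilde{A}_\perp$, hence to $A' = \widetilde{A}_\perp$ once combined with the easy direction. Since $A$ is $\Z_2$-graded (its generators are odd), $Z$ preserves it appropriately, and the problem is reduced to proving the standard commutation theorem $A' = \widetilde{A}_\perp$ for a pair of ordinarily commuting subalgebras generated by orthogonal subspaces.

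The main obstacle is this last step, the ordinary commutant computation, which is the technical heart of the theorem. The standard route is to verify that the Fock vacuum $\Omega \in \Lambda^0 L$ is cyclic and separating for $A$, and then identify the modular conjugation $J$ of the pair $(A, \Omega)$ with (a twist of) the operator implementing the reflection $H_0 \leftrightarrow H_0^\perp$; Tomita-Takesaki theory then yields $JAJ = A'$ and a direct matching identifies $A'$ with $\widetilde{A}_\perp$. Cyclicity of $\Omega$ for $A$ follows from the fact that $L \cap H_0^\C$ is dense in a Lagrangian of $H_0^\C$ (so that the Fock representation of $\Cl(H_0)$ is of Fock type with vacuum $\Omega$), and separating follows from cyclicity of $\Omega$ for $A_\perp$ combined with the already-established super commutation. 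The payoff is then \cref{ThmTwistedDuality}, with the second equation obtained symmetrically by exchanging the roles of $H_0$ and $H_0^\perp$.
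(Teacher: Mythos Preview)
The paper does not give its own proof of this theorem; it is quoted with a citation to \cite{BLJ02} and used as a black box. So there is no ``paper's proof'' to compare against, and your proposal should be read as an attempt to reconstruct the argument from the literature.

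Your overall architecture is the standard one: the easy inclusion is straightforward, and the Klein transform $Z$ is exactly the right device to convert twisted duality into ordinary Haag duality $A' = \widetilde{A}_\perp$, after which one invokes Tomita--Takesaki theory and identifies $J$ with (a twist of) the geometric reflection. That is essentially how the cited references proceed.

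There is, however, a genuine gap in your cyclicity argument. You claim that $L \cap H_0^{\C}$ is dense in a Lagrangian of $H_0^{\C}$, so that the restricted representation of $\Cl(H_0)$ is again of Fock type with the same vacuum. This is false: elements of $L$ are boundary values of functions holomorphic in the disc (they have only positive half-integer Fourier modes), and a nonzero such function cannot be supported in a proper arc. Hence $L \cap H_0^{\C} = \{0\}$, and the restricted state on $\Cl(H_0)$ is genuinely a type~III quasi-free state, not a Fock state. Cyclicity of the vacuum for the local algebra $A$ is instead the Reeh--Schlieder property, whose proof requires an analyticity argument (positivity of the generator of rotations) rather than a direct Fock-space computation. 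Likewise, the explicit identification of the modular conjugation $J$ with the geometric reflection is the substantive step in the cited references and is not automatic; it is typically obtained either by a direct computation on one-particle vectors (as in Wassermann's approach) or via a Bisognano--Wichmann-type argument. Your sketch correctly isolates this as ``the technical heart,'' but the preceding cyclicity step needs to be replaced by the correct Reeh--Schlieder argument before that heart can beat.
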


\begin{remark}
For each closed subset $I \subset S^1$, we may denote by $H_I \subset H$ the Hilbert subspace of functions supported in $I$ and obtain corresponding Clifford algebra $\Cl(H_I)$ with von Neumann completions $A_I$.
It turns out that the assignment $I \mapsto A_I$ has a certain equivariance property with respect to the action of the M\"obius group  on the circle, which constitutes the structure of a \emph{conformal net}. 
It is well-known that the von Neumann algebras $A_I$ (in particular the algebra $A$ defined in \eqref{OurRepresentationSpace}) for proper intervals $I \subset S^1$ are of type III$_1$, see e.g.~\cite[Lemma 2.9]{Gabbiani1993}. In particular, all these algebras are isomorphic.
The von Neumann algebra $A$ is also hyperfinite, as the union of the subalgebras $\Cl(W_n)$ is ultraweakly dense, where $W_1 \subset W_2 \subset \dots$ is an ascending sequence of finite-dimensional subspaces the union of which is dense in $H_0$.
\end{remark}

Summarizing, we obtain the following diagram of subalgebras of $\mathcal{B}(\mathfrak{F})$:
\begin{equation}
\label{diagram-of-subalgebras}
\alxydim{}{\pi(\Cl(H_0)) \ar[rr]\ar[dr] && A\ar[dr]  \\ & \pi(\Cl(H)) \ar[rr] && \mathcal{B}(\mathfrak{F}) \\   \pi(\Cl(H_0^{\perp})) \ar[rr] \ar[ur] && A_{\perp} \ar[ur] }
\end{equation}

\section{The stringor representation}
\label{sec:theStringorRep}

In this section we construct the stringor representation of the string 2-group $\XString(d)$, for $d \geq 5$.
Accordant with \cref{continuous-unitary-representation}, it will be a continuous strict intertwiner
\begin{equation}
\label{StringRep}
        \mathscr{R}: \XString(d) \to \UAUT(A),
\end{equation} 
where $\XString(d)$ is the the string 2-group defined in \cref{DefinitionString2Group}, $A$ is the von Neumann algebra defined in \eqref{OurRepresentationSpace}, and $\UAUT(A)$ is its unitary automorphism 2-group of $A$ as in \cref{DefCrossedModuleAUT}.

We recall that the implementer model for the basic central extension $\widetilde{L\Spin(d)}$, set up in \cref{SectionOperatorModels},  comes with a continuous group homomorphism 
\begin{equation}
\label{GroupHomToUfrak}
\Omega: \widetilde{L\Spin(d)} \to \U(\mathfrak{F})
\end{equation}
with the property that if $\Phi \in \widetilde{L\Spin(d)}$ projects to a loop $\gamma\in L\Spin(d)$, then $\Omega(\Phi)$ implements $\omega(\gamma)\in \O_{\res}(H)$, i.e., the extended Bogoliubov automorphism $\overline{\Cl}_{\omega(\gamma)}$ of $\mathcal{B}(\mathfrak{F})$ is conjugation with $\Omega(\Phi)$.

\begin{lemma}
\label{LemmaNA}
  The group homomorphism $\Omega$ from \cref{GroupHomToUfrak} takes values in the subgroup
\begin{equation*}
  N(A) = \bigl\{ U \in \U(\mathfrak{F}) \mid \forall a \in A : UaU^{*} \in A\bigr\}.
  \end{equation*}
\end{lemma}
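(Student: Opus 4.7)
The plan is to reduce the normalizer condition on the von Neumann algebra $A$ to a much simpler condition on the Clifford subalgebra $\pi(\Cl(H_0))$, and then propagate it to the bicommutant via the fact that conjugation by a unitary is a normal $*$-automorphism.

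First, I would observe the key geometric fact that the orthogonal action $\omega(\gamma)$ on $H = L^2(S^1,\R^d)$ preserves the subspace $H_0$ of functions supported in $[0,\pi]$. This is immediate from the definition: $\omega(\gamma)$ is pointwise multiplication by the loop $\gamma$, which does not alter the support of a function. Consequently, the associated Bogoliubov automorphism $\Cl_{\omega(\gamma)}$ of $\Cl(H)$ restricts to an automorphism of the subalgebra $\Cl(H_0) \subset \Cl(H)$.

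Next, let $\Phi \in \widetilde{L\Spin(d)}$ project to $\gamma \in L\Spin(d)$. By the defining property of $\Omega$ recalled just above the lemma, conjugation by $\Omega(\Phi)$ on $\mathcal{B}(\mathfrak{F})$ is exactly the extended Bogoliubov automorphism $\overline{\Cl}_{\omega(\gamma)}$, and in particular
\begin{equation*}
  \Omega(\Phi)\, \pi(a)\, \Omega(\Phi)^{*} = \pi\bigl(\Cl_{\omega(\gamma)}(a)\bigr) \qquad \text{for all } a \in \Cl(H).
\end{equation*}
Combined with the previous step, this gives the inclusion $\Omega(\Phi)\, \pi(\Cl(H_0))\, \Omega(\Phi)^{*} \subseteq \pi(\Cl(H_0)) \subseteq A$.

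Finally, I would pass from $\pi(\Cl(H_0))$ to its bicommutant $A$. Conjugation by the unitary $\Omega(\Phi)$ is a $*$-automorphism of $\mathcal{B}(\mathfrak{F})$ that is ultraweakly continuous, hence it commutes with taking bicommutants: for any self-adjoint subset $S \subset \mathcal{B}(\mathfrak{F})$,
\begin{equation*}
  \Omega(\Phi)\, S''\, \Omega(\Phi)^{*} = \bigl(\Omega(\Phi)\, S\, \Omega(\Phi)^{*}\bigr)''.
\end{equation*}
Applying this with $S = \pi(\Cl(H_0))$ yields $\Omega(\Phi)\, A\, \Omega(\Phi)^{*} \subseteq A$. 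The same argument applied to $\Phi^{-1}$ gives the reverse inclusion $\Omega(\Phi)^{*} A\, \Omega(\Phi) \subseteq A$, so that $\Omega(\Phi) \in N(A)$. There is no real obstacle here; the only point to be careful about is the last step of transporting the stability of $\pi(\Cl(H_0))$ through the bicommutant, for which normality of inner automorphisms of $\mathcal{B}(\mathfrak{F})$ is the essential ingredient.
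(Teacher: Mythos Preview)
Your proposal is correct and follows essentially the same approach as the paper: both argue that $\omega(\gamma)$ preserves $H_0$, hence $\Cl_{\omega(\gamma)}$ preserves $\Cl(H_0)$, hence conjugation by $\Omega(\Phi)$ preserves $\pi(\Cl(H_0))$ and therefore its weak closure $A$. Your version is slightly more explicit about the passage to the bicommutant and about the reverse inclusion, but the argument is the same.
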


\begin{proof}
For any $\gamma \in L\Spin(d)$, the  orthogonal transformation $\omega(\gamma)$ of $H$ preserves the subspace $H_0$.
Correspondingly, the  Bogoliubov automorphism $\Cl_{\omega(\gamma)}$ preserves the subalgebra $\Cl(H_0) \subset \Cl(H)$.
Since any unitary $U\in \U(\mathfrak{F})$  in the image of $\Omega$ implements some $\omega(\gamma)$ via conjugation,  conjugation with $U$ preserves $\pi(\Cl(H_0))$ and consequently its weak closure $A$.
\end{proof}

\begin{remark}
\label{normaliser-subgroup-is-closed}
We remark that
\begin{equation*}
N(A) =  \bigcap_{a \in A} \bigcap_{b \in A^\prime} \{ U \in \U(\mathfrak{F}) \mid UaU^*b - bUaU^* = 0 \},
\end{equation*}
which shows that $N(A)$ is a closed subgroup of $\U(\mathfrak{F})$.
\end{remark}

By \cref{LemmaNA}
the group homomorphism $\Omega$ restricts to a continuous group homomorphism 
\begin{equation}
\label{restriction-to-normaliser}
\Omega^\prime :\widetilde{L\Spin(d)} \to N(A)\text{.}
%\tilde \Rep_1:\widetilde{L\Spin(d)} \to N(A)\text{.}
\end{equation}
By definition of $N(A)$, its elements $U\in N(A)$ act by conjugation on $A$, thus defining a group homomorphism 
\begin{equation}
\label{from-normalizer-to-automorphisms}
t_{A}:N(A) \to \Aut^{*}(A)\text{.}
\end{equation}
The map $t_{A}$ is continuous by \cite[Lemma A.18]{ConformalNetsI}; this uses that the canonical vacuum vector $\Omega \in \mathfrak{F}$ is  cyclic and separating  for the action of $A$ on $\mathfrak{F}$, thus turning $\mathfrak{F}$ into a standard form for $A$. 
\begin{lemma}
\label{DiagramNA}
For any $\gamma\in L\Spin(d)$, the extended Bogoliubov automorphism $\overline{\Cl}_{\omega(\gamma)}$ of $\mathcal{B}(\mathfrak{F})$ preserves the subalgebra $A\subset \mathcal{B}(\mathfrak{F})$ and hence restricts to an automorphism $\overline{\Cl}{}^\prime_{\omega(\gamma)} \in \Aut(A)$. The corresponding map $\overline{\Cl}{}^\prime_\omega: L\Spin(d) \to \Aut(A)$ is a continuous group homomorphism, and the following diagram is commutative:
\begin{equation*}
\begin{aligned}
  \xymatrix{
    \widetilde{L \Spin(d)} \ar[r]^-{\Omega^\prime} \ar[d] & N(A) \ar[d]^{t_{A}}  \\
    {L\Spin(d)} \ar[r]_-{\overline{\Cl}{}^\prime_\omega} & \Aut(A)\text{.}
  }
\end{aligned}
%\begin{aligned}
%  \xymatrix{
%    \widetilde{L \Spin(d)} \ar[r]^-{\tilde \Rep_1} \ar[d] & N(A) \ar[d]^{t_{A}}  \\
%    {L\Spin(d)} \ar[r]_-{\tilde R_0} & \Aut(A)\text{.}
%  }
%\end{aligned}
\end{equation*}
Moreover, the automorphism  $\overline{\Cl}{}^\prime_{\omega(\gamma)}$ only depends on the restriction of $\gamma$ to its first half, i.e., if $\gamma,\gamma',\gamma''\in P\Spin(d)$ have a common initial point and a common end point, then 
\begin{equation*}
\overline{\Cl}{}^\prime_{\omega(\gamma\cup\gamma')}= \overline{\Cl}{}^\prime_{\omega(\gamma\cup\gamma'')}\text{.}
%\tilde R_0(\gamma\cup\gamma')=\tilde R_0(\gamma\cup\gamma'')\text{.}
\end{equation*}  
\end{lemma}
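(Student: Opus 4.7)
The plan is to reduce every claim to two elementary facts about pointwise multiplication by a loop: that $\gamma\in L\Spin(d)$ preserves the support decomposition $H=H_0\oplus H_0^\perp$, and that acting on vectors supported in $[0,\pi]$ it depends only on $\gamma|_{[0,\pi]}$. Both facts are manifest from the formula $(\omega(\gamma)v)(t)=\gamma(t)v(t)$. Throughout, the key technical lever is that $*$-automorphisms of $\mathcal{B}(\mathfrak{F})$ are automatically ultraweakly continuous, so any equality verified on the $*$-subalgebra $\pi(\Cl(H_0))$ propagates to its ultraweak closure $A$.

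For preservation of $A$ the argument is essentially that of \cref{LemmaNA}: since $\omega(\gamma)$ leaves $H_0$ invariant, $\Cl_{\omega(\gamma)}$ preserves $\Cl(H_0)$, so $\overline{\Cl}_{\omega(\gamma)}$ carries $\pi(\Cl(H_0))$ into itself and, by the ultraweak continuity just mentioned, into $A$; applying the same reasoning to $\gamma^{-1}$ upgrades this invariance to bijectivity. Commutativity of the square is then immediate from the defining property of $\Omega$: if $\Phi\in\widetilde{L\Spin(d)}$ projects to $\gamma$, then conjugation by $\Omega(\Phi)\in N(A)$ on $\mathcal{B}(\mathfrak{F})$ equals $\overline{\Cl}_{\omega(\gamma)}$ and hence restricts on $A$ to $\overline{\Cl}{}^\prime_{\omega(\gamma)}$, which is exactly $t_A(\Omega^\prime(\Phi))$. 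Because $\pi$ admits continuous local sections and both $\Omega^\prime$ and $t_A$ are continuous, this identification yields continuity of $\overline{\Cl}{}^\prime_\omega$, while its group-homomorphism property follows from the functoriality of $g\mapsto\Cl_g$ together with $\omega$ being a homomorphism.

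The final assertion is the one with real geometric content and I expect it to be the main point. For $\gamma,\gamma^\prime,\gamma^{\prime\prime}\in P\Spin(d)$ with matching endpoints, the loops $\gamma\cup\gamma^\prime$ and $\gamma\cup\gamma^{\prime\prime}$ both restrict to $\gamma$ on $[0,\pi]$. For any $v\in H_0$, pointwise multiplication gives
\begin{equation*}
  \bigl(\omega(\gamma\cup\gamma^\prime)v\bigr)(t) \;=\; \gamma(t)\,v(t) \;=\; \bigl(\omega(\gamma\cup\gamma^{\prime\prime})v\bigr)(t)\quad\text{for } t\in[0,\pi],
\end{equation*}
while both sides vanish on $[\pi,2\pi]$ because $v$ does. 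Thus $\omega(\gamma\cup\gamma^\prime)$ and $\omega(\gamma\cup\gamma^{\prime\prime})$ agree on the generating subspace $H_0\subset\Cl(H_0)$, and a short induction on word length extends this to equality of Bogoliubov automorphisms on all of $\Cl(H_0)$. Ultraweak continuity then promotes coincidence on $\pi(\Cl(H_0))$ to the desired equality $\overline{\Cl}{}^\prime_{\omega(\gamma\cup\gamma^\prime)}=\overline{\Cl}{}^\prime_{\omega(\gamma\cup\gamma^{\prime\prime})}$ on $A$. The only real obstacle is the legitimacy of this density argument, which is handled uniformly by the automatic normality of $*$-automorphisms of $\mathcal{B}(\mathfrak{F})$; once that observation is in place the rest is a direct unpacking of definitions.
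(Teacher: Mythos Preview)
Your proposal is correct and follows essentially the same approach as the paper: both argue that $\omega(\gamma)$ preserves $H_0$ so that the Bogoliubov automorphism preserves $\Cl(H_0)$ and hence its weak closure $A$, deduce commutativity of the square from the implementer relation, obtain continuity from local sections of the central extension together with continuity of $\Omega^\prime$ and $t_A$, and settle the final assertion by observing that $\omega(\gamma\cup\gamma^\prime)$ and $\omega(\gamma\cup\gamma^{\prime\prime})$ agree on $H_0$ and invoking automatic weak/ultraweak continuity of $*$-automorphisms to pass from $\pi(\Cl(H_0))$ to $A$. Your write-up is slightly more explicit in unpacking the pointwise-multiplication formula and in noting that bijectivity on $A$ follows by applying the same reasoning to $\gamma^{-1}$, but the logical structure is identical to the paper's proof.
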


\begin{proof}
If $\Phi \in \widetilde{L\Spin(d)}$ is any lift of $\gamma$, then $U:=\Omega'(\Phi)\in N(A)$ implements $\omega(\gamma)$. Thus, $t_{A}(U)$ is the claimed restriction  $\overline{\Cl}{}^\prime_{\omega(\gamma)}$, and the diagram is commutative. 
Continuity of  $\overline{\Cl}{}^\prime_\omega$ follows from the fact that the basic central extension admits local sections, and that  $\Omega^\prime$ and $t_{A}$ are continuous. 
In the situation of the three paths $\gamma,\gamma',\gamma''$, the orthogonal transformations $\omega(\gamma\cup\gamma')$ and $\omega(\gamma\cup\gamma'')$ restrict to the same transformation on $H_0$ and hence induce the \emph{same} Bogoliubov automorphisms on $\Cl(H_0)$. But then, the restrictions $\overline{\Cl}{}^\prime_{\omega(\gamma\cup\gamma')}$ and $\overline{\Cl}{}^\prime_{\omega(\gamma\cup\gamma'')}$ of the extended Bogoliubov automorphisms $\overline{\Cl}_{\omega(\gamma\cup \gamma')}$ and $\overline{\Cl}_{\omega(\gamma\cup \gamma'')}$, respectively, to the weak closure $A$ of $\Cl(H_0)$ also coincide, as $*$-automorphisms of von Neumann algebras are automatically weakly continuous.
\end{proof}

Next we note that $N(A)$ contains the subgroup $\U(A) \subset \U(\mathfrak{F})$ of unitary elements in $A$.
We have the following lemma.

\begin{lemma}
\label{LemmaUinNA}
The restriction of the group homomorphism  $\Omega'$ to the subgroup $\widetilde{L_{[0, \pi]} \Spin(d)}$ factors through the subgroup $\U(A)$:
\begin{equation*}
%\label{PullbackExtension}
\begin{aligned}
  \xymatrix{
    \widetilde{L_{[0, \pi]} \Spin(d)} \ar@{-->}[r] \ar@{^{(}->}[d] & \U(A) \ar@{^{(}->}[d]  \\
    \widetilde{L\Spin(d)} \ar[r]_-{\Omega^\prime} & N(A)
  }
\end{aligned}
\end{equation*}
\end{lemma}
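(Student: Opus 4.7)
The plan is to show that for any $\Phi \in \widetilde{L_{[0,\pi]}\Spin(d)}$ with image $\gamma := \pi(\Phi) \in L_{[0,\pi]}\Spin(d)$, the implementer $U := \Omega^\prime(\Phi)$ lies in the subalgebra $A\subset\mathcal{B}(\mathfrak{F})$; unitarity of $U$ then yields $U\in\U(A)$. The natural tool to detect membership in $A$ is the twisted duality of \cref{ThmTwistedDuality}: $U \in A$ iff $U$ super-commutes with every element of $A_\perp$.

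First, I would verify ordinary commutation of $U$ with $A_\perp$. Since $\gamma(t)=e$ for $t\in[\pi,2\pi]$, pointwise multiplication by $\gamma$ restricts to the identity on the subspace $H_0^\perp\subset H$ of functions supported in $[\pi,2\pi]$. Consequently the Bogoliubov automorphism $\Cl_{\omega(\gamma)}$ fixes $\Cl(H_0^\perp)$ pointwise, and since every $*$-automorphism of a von Neumann algebra is normal, its extension $\overline{\Cl}_{\omega(\gamma)}$ restricts to the identity on the weak closure $A_\perp$. As conjugation by $U$ realizes this extended automorphism, $UaU^{*}=a$ for every $a\in A_\perp$.

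To upgrade ordinary commutation to super-commutation, I need $U$ to be even with respect to the $\Z_2$-grading of $\mathfrak{F}$. I would establish this by a connectedness argument. Any implementer of a Bogoliubov automorphism is automatically homogeneous: denoting the grading involution by $\Gamma$ and using that $\Gamma\pi(v)\Gamma=-\pi(v)$ for $v\in H$, one checks that $\Gamma U\Gamma$ is again an implementer of $\omega(\gamma)$ (the two sign flips cancel in the implementer relation), hence $\Gamma U\Gamma=\epsilon\cdot U$ for some $\epsilon\in\U(1)$, and applying the relation twice forces $\epsilon^2=1$. The parity $\Phi\mapsto\epsilon(\Phi)\in\{\pm 1\}$ is continuous, and because $\Spin(d)$ is simply connected the space $L_{[0,\pi]}\Spin(d)$ (which is homeomorphic to the based loop space of $\Spin(d)$) is connected; its central $\U(1)$-extension $\widetilde{L_{[0,\pi]}\Spin(d)}$ is therefore connected as well. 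Since $\Omega^\prime$ sends the neutral element to the identity operator (which is even), it follows that $\epsilon\equiv 1$, so $U$ is always even.

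With $U$ even, the sign $(-1)^{|U||a|}$ equals $1$ for every homogeneous $a\in A_\perp$, so ordinary commutation coincides with super-commutation and \cref{ThmTwistedDuality} yields $U\in A$. The main obstacle I anticipate is the parity step: one must both verify cleanly that implementers of Bogoliubov automorphisms are homogeneous and record the connectedness of the central extension $\widetilde{L_{[0,\pi]}\Spin(d)}$, in order to legitimize the final appeal to twisted duality. Everything else follows from the now-standard dictionary between orthogonal transformations of $H$, Bogoliubov automorphisms of $\Cl(H)$, and their implementers in $\U(\mathfrak{F})$.
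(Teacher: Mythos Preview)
Your proposal is correct and follows essentially the same route as the paper: commutation with $A_\perp$, evenness of the implementer, and then twisted duality (\cref{ThmTwistedDuality}). The only minor difference is the evenness step: the paper observes that $L\Spin(d)$ is connected, so $\omega(\gamma)$ lies in the identity component of $\O_{\res}(H)$, and then cites \cite[\S3.5 \& p.~134]{PR95} for the fact that such elements are implemented by grading-preserving unitaries; your homogeneity-plus-connectedness argument for $\widetilde{L_{[0,\pi]}\Spin(d)}$ reaches the same conclusion in a more self-contained way.
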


\begin{proof}
Let $U$ be an implementer for  $\omega(\gamma)$, where $\gamma \in L_{[0, \pi]}\Spin(d)$.
We first note that $U$ is a \emph{grading preserving} unitary transformation of $\mathfrak{F}$.
Indeed, since $L\Spin(d)$ is connected, $\omega(\gamma)$ is contained in the identity component of $\O_{\res}(H)$ and elements in the identity component are implemented by grading preserving unitaries \cite[\S3.5 \& p.~134]{PR95}.
Now, as $\gamma$ is supported on $[0, \pi]$, $\omega(\gamma)$ acts trivially on the subspace $H_0^\perp \subset H$, so $\Cl_{\omega(\gamma)}$ acts trivially on $\Cl(H_0^\perp)$.
Hence, $U$ is contained in the commutant of $\Cl(H_0^\perp)$.
Because $U$ is even and the even parts of the commutant and of the super commutant coincide, this implies that $U$ is also contained in the super commutant of $\Cl(H_0^\perp)$.
But this is the same as the super commutant of its weak closure $A_\perp$, which is $A$ by \cref{ThmTwistedDuality}.
\end{proof}

We obtain from \cref{LemmaUinNA} a continuous group homomorphism 
\begin{equation*}
R_1: \widetilde{L_{[0, \pi]} \Spin(d)} \to \U(A)\text{,}
\end{equation*}
fitting into the following commutative diagram of topological groups:
\begin{equation}
\label{big-commutative-diagram}
\alxydim{}{\widetilde{L_{[0, \pi]} \Spin(d)} \ar[r]^-{R_1} \ar@{^(->}[d] & \U(A) \ar@{^(->}[d] \\ \widetilde{L\Spin(d)} \ar[r]^-{\Omega^\prime} \ar[d] & N(A) \ar[d]^{t_{A}} \\ L\Spin(d) \ar[r]_{\overline{\Cl}{}^\prime_\omega} & \Aut(A)\text{.}}
\end{equation}
Finally, we define the continuous group homomorphism 
\begin{equation}
\label{definition-of-R0}
R_0 := \overline{\Cl}{}^\prime_\omega \circ \Delta: P_e\Spin(d) \to \Aut(A)
%R_0 := \tilde R_0 \circ \Delta: P_e\Spin(d) \to \Aut(A)
\end{equation}
using the path doubling map \cref{DeltaGamma}.
Now we have set up the required structure for a strict intertwiner $\mathscr{R}:=(R_0,R_1):\XString(d) \to \UAUT(A)$, and we are in position to prove the main result of this article.

\begin{theorem}
The pair $\mathscr{R}=(R_0,R_1)$ is a unitary representation  $\mathscr{R}: \XString(d) \to \mathscr{U}(A)$.
\end{theorem}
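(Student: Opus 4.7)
The plan is to verify that the pair $(R_0,R_1)$ satisfies the four requirements of a continuous strict intertwiner of topological crossed modules in the sense of \cref{crossed-module,continuous-unitary-representation}. Continuity and the group-homomorphism property of both maps are already established in \cref{DiagramNA,LemmaUinNA}, and $R_1$ has image in $\U(A)$ by construction. What remains is to check: (a) that $R_0$ factors through $\Aut^{*}(A) \subset \Aut(A)$; (b) the structure-map compatibility $R_0(t(\Phi)) = t'(R_1(\Phi))$ for $\Phi \in \widetilde{L_{[0,\pi]}\Spin(d)}$, where $t'$ is conjugation on $\U(A)$; and (c) the action compatibility $R_1(\alpha(\gamma,\Phi)) = R_0(\gamma)(R_1(\Phi))$ for $\gamma \in P_e\Spin(d)$ and $\Phi \in \widetilde{L_{[0,\pi]}\Spin(d)}$.

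Condition (a) holds because for any $g \in \O(H)$ the Bogoliubov automorphism $\Cl_g$ preserves the $*$-structure on the Clifford algebra -- indeed $\Cl_g(v^*) = -\Cl_g(\overline{v}) = -\overline{g(v)} = (\Cl_g(v))^*$ on generators $v \in H^\C$ -- so its inner extension $\overline{\Cl}_g$ on $\BB(\mathfrak{F})$, being conjugation by a unitary, is automatically a $*$-automorphism, and hence so is its restriction $\overline{\Cl}{}'_g$ to $A$. For condition (b), I would compute both sides separately: unwinding the definitions gives $R_0(t(\Phi)) = \overline{\Cl}{}'_{\omega(\Delta r(\pi(\Phi)))}$, while the commutative square of \cref{DiagramNA} gives $t'(R_1(\Phi)) = t_A(\Omega^\prime(\Phi)) = \overline{\Cl}{}'_{\omega(\pi(\Phi))}$. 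Writing $\pi(\Phi) = r(\pi(\Phi)) \cup c$, where $c$ is the constant path at $e$ (which is possible precisely because $\pi(\Phi)$ is supported in $[0,\pi]$), and $\Delta r(\pi(\Phi)) = r(\pi(\Phi)) \cup r(\pi(\Phi))$, we see that both loops share the same first half $r(\pi(\Phi))$ and have matching endpoints; the last clause of \cref{DiagramNA} -- that $\overline{\Cl}{}'_{\omega(\cdot)}$ only depends on the first half of a loop -- therefore identifies the two sides.

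For condition (c), the strategy is to lift the computation to the ambient homomorphism $\Omega^\prime: \widetilde{L\Spin(d)} \to N(A)$. The product $\widetilde{\Delta\gamma}\,\Phi\,\widetilde{\Delta\gamma}^{-1}$ formed in $\widetilde{L\Spin(d)}$ actually lies in the subgroup $\widetilde{L_{[0,\pi]}\Spin(d)}$ -- this is exactly what makes the action \cref{CrossedModuleActionStringGroup} well defined in the first place -- so I can apply $\Omega^\prime$ as a group homomorphism directly to the product and obtain $\Omega^\prime(\widetilde{\Delta\gamma})\cdot R_1(\Phi) \cdot\Omega^\prime(\widetilde{\Delta\gamma})^{-1}$, using that $\Omega^\prime$ extends $R_1$. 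Since $\Omega^\prime(\widetilde{\Delta\gamma})$ implements $\omega(\Delta\gamma)$ by construction of the implementer model, conjugation by it restricted to $A$ is precisely $\overline{\Cl}{}'_{\omega(\Delta\gamma)} = R_0(\gamma)$, which yields the desired formula; the choice of lift $\widetilde{\Delta\gamma}$ is irrelevant because any two lifts differ by a central $\U(1)$-element that cancels in the conjugation. The main subtlety I anticipate lies in (b), where one must carefully distinguish the three superficially similar loops $\pi(\Phi)$, $r(\pi(\Phi))$, and $\Delta r(\pi(\Phi))$ in $L\Spin(d)$; fortunately the \emph{first-half only} clause of \cref{DiagramNA} is tailored to collapse precisely this distinction.
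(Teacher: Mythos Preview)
Your proof is correct and follows essentially the same approach as the paper: your step (b) is exactly the paper's verification of \cref{target-matching-condition} via the first-half-only clause of \cref{DiagramNA}, and your step (c) matches the paper's chain of equalities $R_1(\alpha_{\XString(d)}(\gamma,\Phi)) = \Omega'(\widetilde{\Delta\gamma})\,R_1(\Phi)\,\Omega'(\widetilde{\Delta\gamma})^{-1} = \alpha_{\UAUT(A)}(R_0(\gamma),R_1(\Phi))$ line for line. Your additional step (a) is a harmless clarification --- in the paper $\overline{\Cl}{}'_\omega$ already lands in $\Aut^{*}(A)$ since it arises as $t_A\circ\Omega'$ composed with a local section, and $t_A$ was defined with codomain $\Aut^{*}(A)$.
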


\begin{proof}
First we have to check compatibility with the maps $t$ of the two crossed modules, which we will denote for the moment by $t_{\XString(d)}$ and $t_{\UAUT(A)}$, respectively. We notice that $t_{\UAUT(A)} =t_{A}|_{\U(A)}$.  Thus, we have to show that 
\begin{equation}
\label{target-matching-condition}
R_0(t_{\XString(d)}(\Phi))=t_{A}(R_1(\Phi))
\end{equation}
holds for all $\Phi \in \widetilde{L_{[0, \pi]} \Spin(d)}$. Let $\gamma := t_{\XString(d)}(\Phi)\in P_e\Spin(d)$, i.e.,  $\Phi$ projects to the loop $\gamma\cup \mathrm{c}_e$. By commutativity of the diagram \cref{big-commutative-diagram}, we have
$t_{A}(R_1(\Phi))=\overline{\Cl}{}^\prime_{\omega(\gamma\cup \mathrm{c}_e)}$.
On the other hand, we have 
\begin{equation*}
R_0(t_{\XString(d)}(\Phi)) = R_0(\gamma)=\overline{\Cl}{}^\prime_{\omega(\gamma\cup \gamma)}\text{.}
\end{equation*} 
By \cref{DiagramNA} we have $\overline{\Cl}{}^\prime_{\omega(\gamma\cup \mathrm{c}_e)} = \overline{\Cl}{}^\prime_{\omega(\gamma\cup \gamma)}$; this shows \cref{target-matching-condition}.
 
 Second, we have to check that $\mathscr{\Rep}$ intertwines the crossed module actions, which we denote for the moment by $\alpha_{\XString(d)}$ and $\alpha_{\UAUT(A)}$, respectively.
 Let $\gamma \in P_e \Spin(d)$ and $\Phi \in \widetilde{L_{[0, \pi]} \Spin(d)}$; moreover, let $\widetilde{\Delta\gamma}\in \widetilde{L\Spin(d)}$ be a lift of the loop $\Delta\gamma$. Then, we compute
 \begin{equation*}
 \begin{aligned}
   R_1 (\alpha_{\XString(d)}(\gamma, \Phi)) &= R_1(\widetilde{\Delta\gamma} \;\Phi\; \widetilde{\Delta\gamma}^{-1}) && \text{Definition of $\alpha_{\XString(d)}$, see \cref{CrossedModuleActionStringGroup}}  \\
   &= \Omega^\prime(\widetilde{\Delta\gamma})\;R_1( \Phi)\;\Omega^\prime( \widetilde{\Delta\gamma})^{-1} && \text{Definition of $R_1$, see \cref{LemmaUinNA}} \\
%   &= \tilde R_1(\widetilde{\Delta\gamma})\;R_1( \Phi)\;\tilde R_1( \widetilde{\Delta\gamma})^{-1} && \text{Definition of $R_1$, see \cref{LemmaUinNA}} \\
   &= t_{A}(\Omega^\prime(\widetilde{\Delta\gamma}))(R_1(\Phi)) && \text{Definition of $t_{A}$, see \cref{DefCrossedModuleAUT}} \\
%   &= t_{A}(\tilde R_1(\widetilde{\Delta\gamma}))(R_1(\Phi)) && \text{Definition of $t_{A}$} \\
   &= \overline{\Cl}{}^\prime_{\omega(\Delta \gamma)}(R_1(\Phi)) && \text{Definition of $\overline{\Cl}{}^\prime_{\omega}$, see \cref{DiagramNA}} \\
%   &= \tilde R_0(\Delta\gamma)(R_1(\Phi)) && \text{Definition of $\tilde R_0$, see \cref{DiagramNA}} \\
   &=  R_0(\gamma)(R_1(\Phi)) && \text{Definition  of $R_0$, see \cref{definition-of-R0}} \\
   &= \alpha_{\UAUT(A)}(R_0(\gamma), R_1( \Phi)) && \text{Definition of $\alpha_{\UAUT(A)}$, see \cref{DefCrossedModuleAUT}.}
 \end{aligned}
 \end{equation*}
 This completes the proof. 
\end{proof}

We conclude our construction of the stringor representation $\mathscr{R}$ by several remarks. 

\begin{remark}
\label{RemarkvNnecessary}
  The  operator $\Omega(\Phi)\in \U(\mathfrak{F}) \subset \mathrm{B}(\mathfrak{F})$ associated to $\Phi \in \widetilde{L_{(0,\pi)}\Spin}(d)$ is generally not contained in $\pi(\Cl(H_0))\subset \BB(\mathfrak{F})$ or its norm completion. 
  Therefore, $\XString(d)$ is \emph{not} represented on the algebraic Clifford algebra $\Cl(H_0)$ or its  $\mathrm{C}^*$-completion, and the passage to von Neumann algebras is inevitable.
\end{remark}

\begin{remark}
\label{RemarkRepNontrivial}
Since $A$ is a factor, we have $\pi_1\AUT(A) =\mathrm{ker}(t_{\UAUT(A)})= \U(A) \cap  A' = \U(1)$. 
On $\pi_1$, 
the stringor representation induces the identity, $\pi_1\mathscr{R}=\id_{\U(1)}$. 
On $\pi_0$,
it induces a continuous map $\pi_0 \mathscr{\Rep}: \Spin(d) \to \pi_0\AUT(A) = \Out^{*}(A)$. 
\begin{comment}
Hence, the k-invariant $k_{\AUT(A)}$ of $\AUT(A)$ (see \cref{sec:2groups}) satisfies
        \begin{equation*}
                (\pi_0\Rep)^{*}k_{\AUT(A)} = k_{\XString(d)},
        \end{equation*}
        which is a generator of $\h^3_{\mathrm{SM}}(\Spin(d), \U(1)) \cong \h^4(B\Spin(d),\Z)$.
        This shows that the k-invariant $k_{\AUT(A)}$ is prime in the abelian group $\h^3_{\mathrm{SM}}(\Out(A),\U(1))$.   
%        (Because if it would be divisible, i.e., $k_{\AUT(A)}=m\cdot k'$, then $\pm 1=k_{\XString(d)}=m \cdot (\pi_0\Rep)^{*}k' \in \Z$, so that we get $m=\pm 1$.)
In that sense, the extension 
\begin{equation*}
\mathscr{B} \U(1) \to \AUT(A) \to \pi_0\AUT(A)_{\mathrm{dis}} 
\end{equation*}
is \quot{basic}.
It also shows that our representation is non-trivial, in the sense that it is not naturally isomorphic to a constant representation. 
\end{comment}
\end{remark}

\begin{remark}
\label{re:strict}
It is no surprise that the stringor representation can be realized as a \emph{strict} intertwiner, as opposed to a weak morphism of topological crossed modules, also known as  butterfly \cite{Aldrovandi2009}. Indeed, any  butterfly between $\XString(d)$ and a topological crossed module $\mathscr{G}$,
\begin{equation*}
\xymatrix@C=3em{\mquad\widetilde{L_{[0, \pi]}\Spin(d)}\mquad \ar[dd] \ar[dr] && H \ar[dd] \ar[dl] \\ & B \ar[dr]\ar[dl] \\ \mquad P_e\Spin(d)\mquad \ar@{-->}@/^1pc/[ur] && G}
\end{equation*}
has a section against its NE-SW-sequence, since that sequence is a short exact sequence of topological groups, and  $P_e\Spin(d)$ is  contractible as a topological group.
Hence, the given butterfly  is isomorphic to a \emph{strict morphism} \cite[Prop. 4.5.3]{Aldrovandi2009}, corresponding to a \emph{crossed intertwiner} \cite{Nikolause}, or -- in terms of monoidal groupoids -- a continuous monoidal functor with continuous associator.
\end{remark}

\begin{remark}
\label{smooth-representation}
By construction, the stringor representation is continuous. We recall that the Fr\'echet Lie groups appearing in the crossed module $\XString(d)$ have been equipped with their underlying (Fr\'echet) topology. The forgetful functor $\frech \to \top$ factors through the category $\diff$ of diffeological spaces via  the \quot{manifold diffeology} functor $M: \frech \to \diff$ and the \quot{D-topology} functor $D: \diff \to \top$. The latter has a right adjoint, the functor $C:\top \to \diff$
that equips a topological space with the \quot{continuous diffeology}. As a right adjoint, $C$ preserves limits and hence sends topological crossed modules to diffeological crossed modules. In particular, there is a diffeological version
\begin{equation*}
\UAUT(A)_{\diff} := C(\UAUT(A))
\end{equation*}
of the unitary automorphism 2-group of a von Neumann algebra. By adjunction, our stringor representation induces a \emph{smooth} strict intertwiner
\begin{equation*}
\mathscr{R}^{\infty}: M(\XString(d)) \to \UAUT(A)_{\diff}\text{,}
\end{equation*}
and hence a smooth, diffeological version of the stringor representation. 
\begin{comment}
Indeed, we have continuous maps
\begin{align*}
D(M(P_e\Spin(d)))=(P_e\Spin(d))_{\top} \stackrel{R_0}{\to} \Aut(A)
\\
D(M(\widetilde{L_{[0,\pi]}\Spin(d)})) =(\widetilde{L_{[0,\pi]}\Spin(d)})_{\top} \stackrel{R_1}\to \U(A)   
\end{align*}
By adjunction, they become smooth maps.
\end{comment}
The unitary automorphism 2-group $\UAUT(A)$ of a von Neumann algebra is in fact  delta-generated, meaning that the topologies of  both $\U(A)$ and $\mathrm{Aut}^{*}(A)$ are delta-generated (i.e., determined by the continuous curves). 
This means that $D(\UAUT(A)_{\diff})=\UAUT(A)$, see \cite[Prop. 2.10]{Kihara2018}. Thus, we have $D(\mathscr{R}^{\infty})=\mathscr{R}$, i.e., our stringor representation is obtained by applying the D-topology functor to its smooth version $\mathscr{R}^{\infty}$. 
\end{remark}

\begin{comment}
\begin{lemma}
\label{AUTAIsDeltaGenerated}
$\AUT(A)$ is a $\Delta$-generated topological 2-group.
\end{lemma}

\begin{proof}
We have to show that the spaces $N(A)$ and $\Aut(A)$ are $\Delta$-generated.
As polish groups, they are first-countable and, by \cite{UngarAllKinds}, locally path-connected (see also \cite[Thm.~8]{WhittingtonConnectedness}).
But any first-countable, locally path-connected space $X$ is $\Delta$-generated.
\\
To show this, it suffices to verify that a map $f: X \to Y$ to an arbitrary topological space $Y$ is continuous if for any continuous map $c: [0, 1] \to X$, the composite $f \circ c$ is continuous.
Let $(x_n)_{n \in \N}$ be a sequence in $X$ that converging against some point $x$ and let $U_n$ be a basis of path connected neighborhoods of $x$ such that $x_m \in U_n$ whenever $m \geq n$.
Let moreover $c_n : [\frac{1}{n+1}, \frac{1}{n}] \to U_n$ be a continuous path such that $c_n(\frac{1}{n+1}) = x_{n+1}$ and $c_n(\frac{1}{n}) = x_n$, which exists by path-connectedness of $U_n$, and let $c: (0, 1] \to X$ be the concatenation of these paths.
By construction, setting $c(0) = x$ gives a continuous extension of $c$ to $[0, 1]$. 
Since $f \circ c$ is continuous by assumption, $f(x_n) = (f \circ c)(\frac{1}{n})$ converges to $f(x) = (f \circ c)(0)$ as $n \to \infty$. 
This shows that $f$ is sequentially continuous, which implies continuity by first-countability of $X$.
\end{proof}

\end{comment}\end{remark}

\appendix

%\setsecnumdepth{2}

\section{The 2-group perspective}

\label{2-group-perspective}

In this appendix, we describe another perspective to our stringor representation, namely from the point of view that strict 2-groups are groupoids internal to the category of  groups. Therefore, in this section, we distinguish intentionally between crossed modules (used before as a model for strict 2-groups) and the actual  2-groups. Just like crossed modules, 2-groups exist, in particular, in a continuous setting (\quot{topological 2-group}) and in a smooth setting (\quot{Lie 2-group}).   

\subsection*{Strict 2-groups versus crossed modules}

If $\Gamma$ is a topological 2-group, we denote by $\Gamma_1$ and $\Gamma_0$ its topological groups of morphisms and objects, respectively. We further denote by $s,t: \Gamma_1\to\Gamma_0$ the source and target map, respectively, and by $i:\Gamma_0\to \Gamma_1$ the identity-assigning map. 
When constructing   topological 2-groups  it is worthwhile to notice that composition and inversion of the underlying groupoid  are already determined by the remaining structure. Indeed, it is straightforward to see that 
\begin{equation} \label{FormulaForComposition}
x \circ y = x \, i(s(x))^{-1} y = x \, i(t(y))^{-1} y \text{,}
\end{equation}
for composable morphisms $x, y \in \Gamma_1$, i.e., morphisms such that $s(x) = t(y)$.
It follows from this that the inverse of a morphism $x \in \Gamma_1$ with respect to composition satisfies
\begin{equation}
\label{FormulaForInversion}
        \inv(x)=i(s(x))x^{-1}i(t(x))\text{.}
\end{equation}
\begin{comment}
        Indeed,
        \begin{align*}
                \inv(x) \circ x &= i(s(x))x^{-1}i(t(x)) i(s(x^{\circ-1}))^{-1} x\\
                                &= i(s(x))x^{-1}i(t(x)) i(t(x))^{-1} x \\
                                &= i(s(x))
        \end{align*}
\end{comment}
Moreover, the subgroups $\ker(s)$ and $\ker(t)$ of $\Gamma_1$ commute: let $x \in \ker(s)$, $y \in \ker(t)$, and let $e\in \Gamma_0$ be the unit element. Then 
\begin{equation} \label{eq:KerSKerT}
        y x  = (e \circ y) (x \circ e) = (e \cdot x) \circ (y \cdot e) = x \circ y = x \, i(s(x))^{-1} y = xy.
\end{equation}
We have the following simple converse of these three observations.

\begin{lemma}
        \label{LemmaMinimalData2Group}
        Suppose $\Gamma_0$ and $\Gamma_1$ are topological groups and $s, t:\Gamma_1 \to \Gamma_0$ and $i: \Gamma_0\to\Gamma_1$ are  continuous group homomorphisms such that:
        \begin{enumerate}[{\normalfont (a)}]

                \item 
                        \label{cor:MinimalData2Group:a}
                        $s \circ i = \id_{\Gamma_0} = t \circ i$.

                \item
                        \label{cor:MinimalData2Group:b}
                        $\mathrm{ker}(s)$ and $\mathrm{ker}(t)$ are commuting subgroups.

        \end{enumerate} 
        Then, together with the composition defined by \cref{FormulaForComposition} and the inversion defined in \cref{FormulaForInversion}, this structure constitutes a  topological 2-group. 
\end{lemma}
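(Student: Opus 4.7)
The plan is to take the given structure $(\Gamma_0, \Gamma_1, s, t, i)$ together with the prescribed formulas for composition and inversion, and verify directly the axioms of a groupoid internal to topological groups. The first step is to check that \cref{FormulaForComposition} is unambiguous when $s(x)=t(y)$ (the two expressions coincide tautologically) and that the new operations behave correctly with respect to $s$ and $t$: using that $s$ and $t$ are group homomorphisms, $s \circ i = t \circ i = \id$, and $s(x) = t(y)$, one finds $s(x\circ y) = s(y)$ and $t(x\circ y) = t(x)$, and analogously $s(\inv(x)) = t(x)$, $t(\inv(x)) = s(x)$. This ensures the groupoid composition is defined on the right fiber product.

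Next I would verify the groupoid axioms by direct manipulation in the group $\Gamma_1$. Associativity is immediate from $s(x\circ y) = s(y)$ and cancellation: both bracketings of $(x\circ y)\circ z$ reduce to $x\,i(s(x))^{-1} y\, i(s(y))^{-1} z$. The unit laws follow at once from $s\circ i = t\circ i = \id_{\Gamma_0}$, e.g.\ $i(t(x))\circ x = i(t(x))\,i(s(i(t(x))))^{-1}\,x = i(t(x))\,i(t(x))^{-1}\,x = x$. For inverses, substitute \cref{FormulaForInversion} into \cref{FormulaForComposition}: $x\circ \inv(x) = x\,i(s(x))^{-1}\,i(s(x))\,x^{-1}\,i(t(x)) = i(t(x))$, and symmetrically for the other composition. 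Continuity of $\circ$ and $\inv$ is inherited from continuity of the group operations of $\Gamma_0, \Gamma_1$, of $s, t, i$, and of the group inversion on $\Gamma_1$.

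The one step where the hypothesis \ref{cor:MinimalData2Group:b} is genuinely used, and which I expect to be the main obstacle to write out cleanly, is the \emph{interchange law}, i.e.\ the fact that $\circ$ is a homomorphism of groups $\Gamma_1\times_{\Gamma_0}\Gamma_1 \to \Gamma_1$. For composable pairs $(x_1,y_1)$ and $(x_2,y_2)$, writing $a := i(s(x_1))^{-1}$ and $b := i(s(x_2))^{-1}$, the desired identity $(x_1x_2)\circ(y_1y_2) = (x_1\circ y_1)(x_2\circ y_2)$ reduces after cancelling $x_1$ on the left and $y_2$ on the right to
\begin{equation*}
x_2\, b\, a\, y_1 \;=\; a\, y_1\, x_2\, b.
\end{equation*}
A direct check with the homomorphism property of $s$ and $t$ shows $x_2 b \in \ker(s)$ and $a y_1 \in \ker(t)$, so the required equality is exactly the commutativity assumption \ref{cor:MinimalData2Group:b}. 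This is the content of the lemma: the algebraic condition on the kernels is precisely what enforces that the groupoid structure defined by \cref{FormulaForComposition,FormulaForInversion} is internal to topological groups, making $\Gamma$ a topological 2-group.
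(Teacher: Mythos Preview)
Your proof is correct and complete. The paper itself does not supply a proof of this lemma, treating it as a ``simple converse'' of the observations \eqref{FormulaForComposition}--\eqref{eq:KerSKerT} and referring to \cite[Lemma~3.3.1]{LudewigWaldorf2Group} for the smooth version; your direct verification of the groupoid axioms, with the interchange law reduced to the commutation of $\ker(s)$ and $\ker(t)$, is exactly the intended elementary argument.
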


\begin{remark}
\Cref{LemmaMinimalData2Group} holds  verbatim in the smooth case, with the only modification that the subgroups in (b) have to be \emph{Lie} subgroups, see \cite[Lemma 3.3.1]{LudewigWaldorf2Group} for a discussion of the smooth case.  
\end{remark}

Let us recall  the usual equivalence between the category of topological crossed modules and continuous strict intertwiners on one side, and topological 2-groups and 2-group homomorphisms (functors whose component maps are continuous group homomorphisms) on the other side:
\begin{equation} 
\label{AdjunctionLieCross}
\xymatrix{
  \mathscr{X} : \bigset{5.1em}{Topological 2-groups} \ar@<3pt>[r] & \ar@<3pt>[l] \bigset{7.1em}{Topological crossed modules} : \mathcal{G}\text{.}
}
\end{equation}
If $\Gamma$ is a topological 2-group, then we put $G:= \Gamma_0$, $H:=\mathrm{ker}(s)\subset \Gamma_1$, $t:=(t:\Gamma_1\to \Gamma_0)|_{\mathrm{ker}(s)}$, and $\alpha_g(h) := i(g) h\, i(g)^{-1}$ to obtain a topological crossed module $\mathscr{X}(\Gamma)$. Conversely, if $\mathscr{G}=(G,H,t,\alpha)$ is a topological crossed module, then putting $\Gamma_0:= G$, $\Gamma_1 := H \rtimes_{\alpha} G$, $s_{\Gamma}(h,g) := g$, $t_{\Gamma}(h,g) := t(h)g$, and $i(g):= (1,g)$ provides the input data for \cref{LemmaMinimalData2Group} and hence a topological 2-group $\mathcal{G}(\mathscr{G})$.

Moreover, to a strict intertwiner $\mathscr{R}=(R_0,R_1)$ between topological crossed modules, the functor $\mathcal{G}$ associates the 2-group homomorphism $\mathcal{G}(\mathscr{R})$ that is $\mathcal{G}(\mathscr{R})_0:= R_0$ on the level of objects and $\mathcal{G}(\mathscr{R})_1 :=R_1 \times R_0$ on the level of morphisms. 

For instance, our stringor representation $\mathscr{R}$ becomes a   2-group homomorphism
\begin{equation*}
\mathcal{G}(\mathscr{R}): \mathcal{G}(\XString(d)) \to \mathcal{G}(\UAUT(A))\text{.}
\end{equation*} 
The point of this appendix is that the 2-groups $\mathcal{G}(\XString(d))$ and $\mathcal{G}(\UAUT(A))$, as well as the 2-group homomorphism $\mathcal{G}(\mathscr{R})$, have \quot{nicer} descriptions than the ones produced from their crossed module description. More precisely, the groups of morphisms, 
\begin{align*}
\mathcal{G}(\XString(d))_1 &= \widetilde{L_{[0,\pi]}\Spin(d)} \rtimes P_e\Spin(d)
\\
\mathcal{G}(\UAUT(A))_1 &= \U(A) \rtimes \Aut(A)\text{,}
\end{align*}
 have descriptions that use no semi-direct products, see \cref{2-group-version-of-string,2-group-version-of-UAUT}. 
 
 \subsection*{The string group as a strict 2-group}
 
 The case of the string 2-group is treated in \cite[\S 3.3]{LudewigWaldorf2Group}, and we recall this briefly.
The basis is the unique existence of a \emph{fusion factorization} for the basic central extension of the loop group $L\Spin(d)$: a Lie group homomorphism 
\begin{equation}
\label{DefinitionFusionFactorization}
i:P_e\Spin (d) \to \widetilde{L\Spin(d)}
\end{equation} such that $i(\gamma)$ projects to the loop $\Delta\gamma=\gamma \cup\gamma$. An explicit construction of this fusion factorization in terms of the implementer model was given before in \cite[\S 5]{Kristel2019}. 

We denote by $\widetilde{P_e\Spin(d)^{[2]}}$ the pullback of the basic central extension along the map $\cup: P_e\Spin(d)^{[2]} \to L\Spin(d)$. Then,
we set up a 2-group with source and target maps
\begin{equation*}
\alxydim{@R=0em}{\\\widetilde{P_e\Spin(d)^{[2]}} \ar@/^1.5pc/[rr]^-{s} \ar@/_1.5pc/[rr]_-{t} \ar[r] &P_e\Spin(d)^{[2]}  \ar@<3pt>[r]^-{\pr_2} \ar@<-3pt>[r]_-{\pr_1} & P_e\Spin(d)\text{,}\\\strut}
\end{equation*} 
and identity map $i$.
Via \cref{LemmaMinimalData2Group} (using and the disjoint-commutativity of $\widetilde{L\Spin(d)}$, see \cref{sec:string2group}) this defines a Lie 2-group $\GString(d)$.  

\begin{proposition}
The Lie 2-group $\GString(d)$ and the Lie crossed module $\XString(d)$ correspond to each other under the equivalence \cref{AdjunctionLieCross}; precisely, we have
$\mathscr{X}(\GString(d)) =  \XString(d)$.
\end{proposition}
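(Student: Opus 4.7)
The plan is to unpack the equivalence $\mathscr{X}$ from \cref{AdjunctionLieCross} applied to $\GString(d)$, and match its four crossed-module ingredients ($G$, $H$, $t$, $\alpha$) against those of $\XString(d)$ one by one. By construction $\GString(d)_0 = P_e\Spin(d)$, so the base group already agrees with that of $\XString(d)$ and the $G$-part is immediate.

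Next I would identify $\ker(s)$ inside the morphism group $\Gamma_1 = \widetilde{P_e\Spin(d)^{[2]}}$. The key observation is that $\Gamma_1$ is a pullback of topological groups along $\cup\colon P_e\Spin(d)^{[2]} \to L\Spin(d)$, so its group law is componentwise, and $s$ factors as the projection to $P_e\Spin(d)^{[2]}$ followed by $\pr_2$. Consequently, $\ker(s)$ consists of triples $(\Phi, \gamma_1, \mathrm{c}_e)$ with $\gamma_1(\pi) = \mathrm{c}_e(\pi) = e$, so that $\gamma_1 \in L_{[0, \pi]}\Spin(d)$ and $\Phi$ lifts $\gamma_1 \cup \mathrm{c}_e = \gamma_1$. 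This gives a canonical identification $\ker(s) \cong \widetilde{L_{[0, \pi]}\Spin(d)}$ of topological groups, matching the $H$-part of $\XString(d)$.

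Under this identification, $t$ restricted to $\ker(s)$ simply reads off the first factor $\gamma_1 \in P_e\Spin(d)$, which agrees with the composition $r \circ \pi$ from \cref{TMapStringGroup}. For the action, I would use that the identity-assigning map $i\colon P_e\Spin(d) \to \widetilde{P_e\Spin(d)^{[2]}}$ is built from the fusion factorization \cref{DefinitionFusionFactorization}, sending $\gamma$ to the distinguished lift of $(\gamma, \gamma)$ above $\Delta\gamma$. Then $\alpha_\gamma(\Phi) = i(\gamma)\,\Phi\,i(\gamma)^{-1}$ computed in the pullback is pointwise conjugation on the $P_e\Spin(d)^{[2]}$-factor, yielding $(\gamma\gamma_1\gamma^{-1}, \mathrm{c}_e)$ which still lies in $\ker(s)$, and on the central-extension factor is conjugation by a specific lift of $\Delta\gamma$. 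This coincides with \cref{CrossedModuleActionStringGroup}, because the right-hand side there is independent of the choice of lift of $\Delta\gamma$ by the centrality of the extension.

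I do not expect a real obstacle here: the entire argument is definition-chasing. The only points worth stating with care are the componentwise group law on the pullback $\widetilde{P_e\Spin(d)^{[2]}}$ and the fact that the fusion factorization is a Lie group homomorphism (so that $i$ genuinely defines the identity-assigning map of a Lie 2-group in the sense of \cref{LemmaMinimalData2Group}), both of which are already built into the construction of $\GString(d)$.
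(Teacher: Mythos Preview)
Your proposal is correct and takes essentially the same approach as the paper's proof, which is a one-line remark that the result is clear from the constructions together with the observation that the fusion factorization value $i(\gamma)$ serves as the lift $\widetilde{\Delta\gamma}$ in \cref{CrossedModuleActionStringGroup}. You have simply unpacked this definition-chasing in detail, and your identification of $\ker(s)$, $t$, and $\alpha$ all match precisely what the paper leaves implicit.
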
 

\begin{proof}
This is clear from the given constructions, and the fact that $i(\gamma)$ can serve as the lift $\widetilde{\Delta\gamma}$ used in \cref{CrossedModuleActionStringGroup}.
\end{proof}

\begin{remark}
\label{2-group-version-of-string}
Corresponding to the equality $\mathscr{X}(\GString(d)) =  \XString(d)$ we have a canonical Fr\'echet Lie 2-group isomorphism
\begin{equation*}
\GString(d) \cong \mathcal{G}(\XString(d))
\end{equation*}
coming from the canonical natural isomorphism $\mathcal{G}\circ \mathscr{X}\cong \id$ belonging to the equivalence \cref{AdjunctionLieCross}. 
Explicitly, on the level of morphisms, the map
\begin{equation*}
\widetilde{P_e\Spin(d)^{[2]}}\to \widetilde{L_{[0,\pi]}\Spin(d)} \rtimes P_e\Spin(d); \quad U \mapsto(U\cdot  i(s(U))^{-1}, s(U)) 
\end{equation*}
is an isomorphism of Fr\'echet Lie groups. 
\end{remark}

\subsection*{The automorphism group of a von Neumann algebra as a strict  2-group}

Next we describe the crossed module $\UAUT(A)$ for the unitary automorphism 2-group of a von Neumann algebra $A$ as a strict topological 2-group.  For this purpose, we choose a standard form $\mathfrak{F}$ of $A$ (e.g., $\mathfrak{F}$ could be defined using a cyclic and separating vector $\Omega \in \mathfrak{F}$).   In this context, we also make  use of the modular conjugation operator $J: \mathfrak{F} \to \mathfrak{F}$ provided by the structure of the standard form.

 We define topological 2-group $\mathcal{U}(A)$ in the following way.
Its group of objects is the  group $\Aut^{*}(A)$, equipped with the $u$-topology. 
Its group of morphisms is the closed subgroup $N(A)\subset \U(\mathfrak{F})$ defined in \cref{LemmaNA}.
The  group homomorphism 
$t_A: N(A) \to \Aut(A)$
from \cref{from-normalizer-to-automorphisms} is the target map. 
We define the source map by
\begin{equation} \label{DefinitionSource}
        s_A: N(A) \to \Aut(A), \qquad s_A(U) := t_A(JUJ) \text{.}
\end{equation}
Clearly, $s_A$ is also a group homomorphism and continuous.
The group homomorphism 
\begin{equation} \label{DefinitionIdentityCanonicalImplementatoin}
  i : \Aut(A) \to N(A)
\end{equation}
assigning the identity morphism is provided by the following classical theorem, see \cite[Thm.~3.2, Prop.~3.5]{HaagerupStandardForm}.

\begin{theorem}
        \label{ThmHaagerupStandardForm}
        For every $\theta \in \Aut(A)$, there exists a unique element $i(\theta) \in N(A)$ such that
        \begin{equation} \label{DefiningPropertiesi}
                i(\theta) a\,  i(\theta)^* = \theta(a), \qquad  i(\theta) J  = J \, i(\theta), \qquad  i(\theta) P = P.
        \end{equation}
        Moreover, the corresponding map $i : \Aut(A) \to N(A)$ is a continuous group homomorphism.\end{theorem}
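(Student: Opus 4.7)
The theorem is classical and is due to Haagerup; my plan is to follow his argument, which passes through the natural positive cone $P$. The central tool is the canonical bijection $P \to A_*^+$, $\xi \mapsto \omega_\xi$ with $\omega_\xi(a) := \langle a \xi, \xi\rangle$, which is a homeomorphism in the respective norm topologies. Any $\theta \in \Aut^*(A)$ induces a bijection $\theta_* : A_*^+ \to A_*^+$, $\phi \mapsto \phi \circ \theta^{-1}$, and I transport it to a bijection $\tau_\theta : P \to P$ via this homeomorphism. The key lemma is that $\tau_\theta$ preserves the inner product inherited from $\mathfrak{F}$, and hence extends uniquely to a real-linear isometry of $\mathfrak{F}^J := \{\xi \in \mathfrak{F} : J\xi = \xi\}$ and, using $\mathfrak{F} = \mathfrak{F}^J \oplus i\mathfrak{F}^J$, to a unitary $i(\theta) \in \U(\mathfrak{F})$ which by construction commutes with $J$ and preserves $P$.

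Next I would verify that $i(\theta)$ implements $\theta$. For $\xi \in P$ and $a \in A$, one has $\langle a\, i(\theta)\xi, i(\theta)\xi\rangle = \omega_{\tau_\theta(\xi)}(a) = \omega_\xi(\theta^{-1}(a)) = \langle \theta^{-1}(a)\, \xi, \xi\rangle$, so $i(\theta)^* a\, i(\theta) - \theta^{-1}(a)$ has vanishing vector state on all of $P$. Since the $\omega_\xi$, $\xi \in P$, separate elements of $A$, this forces $i(\theta)\, a\, i(\theta)^* = \theta(a)$, and in particular $i(\theta) \in N(A)$.

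For uniqueness, suppose $U \in N(A)$ also satisfies the three conditions. Then $V := U\, i(\theta)^*$ implements $\id_A$, so $V \in A'$; commutes with $J$, so $V \in JA'J = A$ and hence $V \in Z(A)$; and preserves $P$. For any $\xi \in P$ we then have $V\xi \in P$ and $\omega_{V\xi}(a) = \langle V^* a V \xi, \xi\rangle = \omega_\xi(a)$ since $V$ is central. Injectivity of the bijection $P \to A_*^+$ gives $V\xi = \xi$, and because $P$ has dense complex linear span in $\mathfrak{F}$, we conclude $V = I$. The group homomorphism property follows at once: $i(\theta_1)\, i(\theta_2)$ satisfies all three defining conditions of $i(\theta_1 \theta_2)$ and must therefore equal it.

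The main obstacle is continuity. I would prove it in two steps. First, the definition of the $u$-topology via the seminorms in \cref{Definitionutopology} is equivalent to saying that $\theta_n \to \theta$ precisely when $\phi \circ \theta_n^{-1} \to \phi \circ \theta^{-1}$ in the norm of $A_*$ for each $\phi \in A_*$. Specializing to $\phi \in A_*^+$ and invoking the norm-continuity of the bijection $A_*^+ \leftrightarrow P$, one obtains $i(\theta_n)\xi = \tau_{\theta_n}(\xi) \to \tau_\theta(\xi) = i(\theta)\xi$ in the norm of $\mathfrak{F}$ for each $\xi \in P$. Second, the operators $i(\theta_n)$ are uniformly bounded by $1$, and $P$ has dense complex linear span in $\mathfrak{F}$, so a standard $\varepsilon/3$-argument upgrades pointwise convergence on $P$ to strong-operator convergence on all of $\mathfrak{F}$, which yields the claimed continuity of $i$ (and, a fortiori, ultraweak continuity in $N(A)$).
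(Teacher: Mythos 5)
The paper does not actually prove this statement; it is quoted verbatim from Haagerup's standard-form paper (Thm.~3.2 and Prop.~3.5 there), so your sketch has to stand on its own --- and it has two genuine gaps. The first is that the entire weight of the theorem rests on what you call ``the key lemma'': that the transported map $\tau_\theta \colon P \to P$ preserves inner products (equivalently, is additive, so that it extends to a unitary of $\mathfrak{F}$). The correspondence $\xi \mapsto \omega_\xi$ is quadratic, not linear, so the linearity of $\theta_*$ on $A_*^+$ gives no information about linearity of $\tau_\theta$; establishing this is essentially the content of Haagerup's uniqueness theorem for standard forms, and every known proof goes through modular theory (e.g.\ the description $P = \overline{\Delta^{1/4} A_+ \xi_0}$ for a cyclic separating $\xi_0 \in P$ in the $\sigma$-finite case, plus a patching argument in general). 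Asserting the lemma without proof leaves the hardest part of the theorem unaddressed.

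The second gap is the implementation step. From $\langle i(\theta)^* a\, i(\theta)\xi, \xi\rangle = \langle \theta^{-1}(a)\xi, \xi\rangle$ for all $\xi \in P$ you cannot conclude $i(\theta)^* a\, i(\theta) = \theta^{-1}(a)$ by invoking that the $\omega_\xi$ separate $A$: the operator $i(\theta)^* a\, i(\theta)$ is a priori only in $\BB(\mathfrak{F})$, not in $A$, and vector functionals along $P$ do \emph{not} separate $\BB(\mathfrak{F})$ (any operator whose restriction to the real Hilbert space $\mathfrak{F}^J$ is skew-symmetric has vanishing expectation at every $\xi \in P$). Nor do the structural properties rescue you: a unitary commuting with $J$ and preserving $P$ need not normalize $A$ at all --- for $A = \BB(H)$ in its standard form on Hilbert--Schmidt operators, the transpose map preserves the cone and commutes with $J$ but conjugates $A$ onto $A'$. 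The standard repair is to construct $i(\theta)$ so that implementation is automatic (a GNS-type definition $a\xi_\varphi \mapsto \theta(a)\xi_{\varphi\circ\theta^{-1}}$ with $\xi_\varphi \in P$ cyclic and separating) and then verify $J$-commutation and $P$-invariance via modular theory, i.e.\ Haagerup's actual route. Your uniqueness argument (reduction to a central unitary fixing $P$ pointwise), the homomorphism property, and the continuity argument via the norm estimates between $\|\xi_\varphi - \xi_\psi\|$ and $\|\varphi - \psi\|$ together with uniform boundedness are all fine, but they sit on top of the two unproved or incorrectly justified steps above.
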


The group homomorphism $i$ is also called  the \emph{canonical implementation}.
One can see directly from the first equation in \cref{DefiningPropertiesi} that $t_A \circ i$ is the identity on $\Aut(A)$, and the second equation implies that also $s_A \circ i$ is the identity.
\begin{comment}
In particular, $t$ is surjective, i.e., any strongly continuous $*$-automorphism $\theta$ is implemented by some unitary on $F$.
The existence of $i$ shows that we have an isomorphism of topological groups 
\begin{equation*}
\Aut(A) \cong N(A)/\U(A^\prime).
\end{equation*}
Since $\ker(t) = \U(A^\prime)$ is closed in $N(A)$, this implies that $\Aut(A)$, as the quotient of a Polish group by a closed subgroup, is Polish.
We remark that in contrast to $N(A)$ and $t$, the identity morphism $i$ depends on  the choice of $J$ and $P$.
\end{comment}
We further note that $\mathrm{ker}(t_A)\subset A'$ and $\mathrm{ker}(s_A)\subset A''=A$, so that by now all conditions of \cref{LemmaMinimalData2Group} are satisfied, and we obtain the topological 2-group $\mathcal{U}(A)$.

\begin{proposition}
The topological 2-group $\mathcal{U}(A)$ and the crossed module $\UAUT(A)$ correspond to each other under the equivalence \cref{AdjunctionLieCross}; precisely, we have
$\mathscr{X}(\mathcal{U}(A)) = \UAUT(A)$.
\end{proposition}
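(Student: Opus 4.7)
The plan is to unpack the definition of $\mathscr{X}$ on $\mathcal{U}(A)$ and match the resulting quadruple $(G,H,t,\alpha)$ with the crossed module $\UAUT(A)$ piece by piece. Since $\mathscr{X}(\mathcal{U}(A))_0 = \mathcal{U}(A)_0 = \Aut^{*}(A)$ by definition, the only non-trivial identifications concern the group of arrows, the target map, and the crossed module action.

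First I would identify $\mathscr{X}(\mathcal{U}(A))_1 = \ker(s_A)$ with $\U(A)$. By definition of $t_A$, a unitary $V \in N(A)$ satisfies $t_A(V) = \id_A$ if and only if $VaV^{*} = a$ for every $a \in A$, i.e., $V \in A^\prime$. Applying this to $V = JUJ$ gives $s_A(U) = \id_A \iff JUJ \in A^\prime$. By Tomita--Takesaki theory, the modular conjugation $J$ associated to the chosen standard form satisfies $JAJ = A^\prime$, so $JUJ \in A^\prime \iff U \in JA^\prime J = A$. Combined with unitarity and with the fact that $U \in A$ automatically lies in $N(A)$ (since $UaU^{*} \in A$ for $a \in A$), this gives the equality of topological groups $\ker(s_A) = \U(A)$. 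This is the step I expect to be the main obstacle, in the sense that it is the only place where a non-trivial operator-algebraic input (Tomita--Takesaki duality) is invoked; the rest of the identification is bookkeeping.

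Next, I would check that the restriction of the target map $t_A: N(A) \to \Aut^{*}(A)$ to $\ker(s_A) = \U(A)$ coincides with the map $t: \U(A) \to \Aut^{*}(A)$ in $\UAUT(A)$. Both send $u \in \U(A)$ to the inner automorphism $a \mapsto uau^{*}$, so this is immediate from the definitions in \cref{DefCrossedModuleAUT} and \cref{DefinitionSource}.

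Finally, I would verify that the crossed module action obtained from $\mathcal{U}(A)$ via $\mathscr{X}$, namely $\alpha_\theta(u) = i(\theta)\, u\, i(\theta)^{-1}$ for $\theta \in \Aut^{*}(A)$ and $u \in \U(A)$, agrees with the evaluation action $\theta \cdot u = \theta(u)$ used in $\UAUT(A)$. This follows directly from the first defining property of the canonical implementation in \cref{ThmHaagerupStandardForm}: $i(\theta)\, a\, i(\theta)^{*} = \theta(a)$ for all $a \in A$, applied to $a = u \in \U(A) \subset A$ (and using that $i(\theta)$ is unitary so $i(\theta)^{-1} = i(\theta)^{*}$). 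Collecting the three identifications gives the equality of topological crossed modules $\mathscr{X}(\mathcal{U}(A)) = \UAUT(A)$.
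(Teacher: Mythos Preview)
Your argument follows the same route as the paper's proof: identify $\ker(s_A)$ with $\U(A)$ via $JA'J = A$, then read off the target map and the action from the defining property of the canonical implementation $i$. The logic is correct.

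There is one point you pass over that the paper addresses explicitly: you assert ``equality of topological groups $\ker(s_A) = \U(A)$'' without checking the topologies. In $\UAUT(A)$ the group $\U(A)$ is equipped with the ultraweak topology, whereas $\ker(s_A) \subset N(A) \subset \U(\mathfrak{F})$ inherits the strong operator topology from $\U(\mathfrak{F})$. These coincide on $\U(A)$ (for $\mathfrak{F}$ a standard form every normal functional is a vector functional, so the ultraweak topology agrees with the weak operator topology, which in turn agrees with the strong topology on the unitary group), but this deserves a sentence since the claim is an equality of \emph{topological} crossed modules.
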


\begin{proof}
The topological crossed module $\mathscr{X}(\mathcal{U}(A))$ is given by 
\begin{equation*}
{t_A}|_{\ker(s_A)}:N(A) \supset \ker(s_A) \to \Aut(A)\text{,}
\end{equation*}
and $\theta \in \Aut(A)$ acts on $\ker(s_A)$ by conjugation with $i(\theta)$.
We observe that $\ker(s_A)$ consists of those $U \in N(A)$ such that conjugation by $JUJ$ acts trivially on $A$; in other words, $JUJ \in A^\prime$ and $U \in A$.
This shows that $\ker(s_A) = \U(A)$.
Here, by construction of $\AUT(A)$, $\U(A) \subset N(A) \subset \U(F)$ inherits the strong operator topology on $\U(\mathfrak{F})$. This topology coincides with the ultraweak topology of $\U(A) \subset A$.
%
\begin{comment}
  It is well-known that if $F$ is a standard form, any element $\omega$ of the predual $A_*$ has the form $\omega(a) = \langle \xi, a\eta\rangle$, for $\xi, \eta \in F$. 
  The topology on $\U(A)$ is given by the family of seminorms $\nu_\omega(u) = |\omega(u)|$, where $\omega$ ranges over $A_*$, which by the previous remark is equal to the family of seminorms $\nu_{\xi, \eta}(u)  = |\langle \xi, u \eta\rangle|$, where $\xi, \eta$ ranges over $F$. 
  But this family of seminorms induces precisely the weak topology on $\B(F)$, which coincides with the strong topology on $\U(H)$.
\end{comment}
%
By the first property of $i$ in \eqref{DefiningPropertiesi}, the induced action of $\Aut(A)$ on $\ker(s_A) = \U(A)$ is precisely the evaluation action.
Thus, we have $\mathscr{X}(\mathcal{U}(A))=\UAUT(A)$. 
\end{proof}

\begin{remark}
\label{2-group-version-of-UAUT}
Using again the natural isomorphism $\mathcal{G}\circ \mathscr{X}\cong \id$, we obtain a canonical 2-group isomorphism
\begin{equation*}
\mathcal{U}(A) \cong \mathcal{G}(\UAUT(A))\text{.}
\end{equation*}
In particular, on the level of morphisms, the map
\begin{equation*}
 N(A) \to \U(A) \rtimes \Aut(A);\quad U \mapsto (U i(s_A(U))^{-1}, s_A(U)) 
\end{equation*}
is an isomorphism of topological groups. 
\end{remark}

\subsection*{The stringor representation in the  2-group setting}

The following lemma about implementers and the modular conjugation operator will be important, see \cite[\S 4.1]{Kristel2019}. We let $\tau: H \to H$ be induced by the map $t \mapsto 2\pi - t$ on $S^1$, and in turn let $\sigma: \O(H) \to \O(H)$ be defined by $\sigma(g) := \tau\circ \sigma\circ \tau$. The map $\sigma$ restricts to a smooth automorphism of the Banach Lie group $\O_{\mathrm{res}}(H)$. Note that $\sigma(\omega(\gamma_1\cup\gamma_2))=\omega(\gamma_2\cup\gamma_1)$ for any pair $(\gamma_1,\gamma_2) \in P_e\Spin(d)^{[2]}$.   

\begin{lemma}[{{\cite[Prop. 4.9 \& 4.11]{Kristel2019}}}]
\label{implementation-and-J}
If $U \in \U(\mathfrak{F})$ implements $g\in \mathrm{O}_{\mathrm{res}}(H)$, then $JUJ$ implements $\sigma(g)$. The corresponding map $\tilde\sigma: \Imp(H) \to \Imp(H)$ is a Banach Lie group homomorphism, thus making the diagram
\begin{equation*}
\alxydim{}{\Imp(H) \ar[d] \ar[r]^{\tilde\sigma} & \Imp(H) \ar[d] \\ \O_{\mathrm{res}}(H) \ar[r]_{\sigma} & \O_{\mathrm{res}}(H)}
\end{equation*} 
commutative.
\end{lemma}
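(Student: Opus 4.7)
The plan is to prove the lemma by first identifying the action of conjugation by $J$ on Clifford generators, and then propagating this through the implementer equation.

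\textbf{Step 1 (key computation).} The main input is the identity
\begin{equation*}
  J \pi(v) J = \varepsilon(v)\, \pi(\tau v) \qquad \text{for all } v \in H,
\end{equation*}
where $\varepsilon(v) \in \{\pm 1\}$ is a sign depending only on the degree/parity and is compatible with the grading of $\pi$. This identity is the characterizing property of the modular conjugation $J$ for the standard form $(\mathfrak{F},A,\Omega)$ of $A = \pi(\Cl(H_0))''$: because $J$ intertwines $A$ with its (super) commutant $A_\perp = \pi(\Cl(H_0^\perp))''$ by \cref{ThmTwistedDuality}, and because $\tau$ exchanges $H_0$ and $H_0^\perp$, conjugation by $J$ must carry Clifford generators $\pi(v)$ to Clifford generators of the super commutant, which are precisely $\pi(\tau v)$ up to a sign forced by the $\Z_2$-grading. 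Concretely, one verifies this on the vacuum using the explicit description of $J$ in the Fock model (see \cite[\S 4.1]{Kristel2019}) and extends by the defining intertwining property of $J$.

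\textbf{Step 2 (implementation of $\sigma(g)$).} Assume $U \in \U(\mathfrak{F})$ implements $g \in \O_{\mathrm{res}}(H)$, i.e.\ $U \pi(v) U^* = \pi(gv)$ for all $v \in H$. Then for any $v \in H$,
\begin{equation*}
(JUJ)\, \pi(v)\, (JUJ)^{*} = JU\bigl(J \pi(v) J\bigr)U^* J = \varepsilon(v)\, J\, \pi(g\tau v)\, J = \varepsilon(v)\varepsilon(g\tau v)\, \pi(\tau g \tau v).
\end{equation*}
Since $g \in \O(H)$ preserves the grading used to define $\varepsilon$, the two signs cancel, and one obtains $(JUJ)\pi(v)(JUJ)^* = \pi(\sigma(g) v)$. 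Hence $JUJ$ implements $\sigma(g) = \tau g \tau$.

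\textbf{Step 3 (group-homomorphism property and smoothness).} The map $\tilde\sigma(U) := JUJ$ is a group homomorphism of $\U(\mathfrak{F})$ (the two $J$'s are antilinear, so their combined effect is complex linear and multiplicative); Step 2 shows that it restricts to a homomorphism $\tilde\sigma : \Imp(H) \to \Imp(H)$ covering $\sigma : \O_{\mathrm{res}}(H) \to \O_{\mathrm{res}}(H)$, which establishes commutativity of the displayed square. For smoothness, one uses that $\Imp(H)$ is a Banach Lie group realized as the pullback \eqref{ImplementerExtensionSeq} of the smooth homomorphism $\sigma$ on $\O_{\mathrm{res}}(H)$ (smoothness of $\sigma$ on $\O_{\mathrm{res}}(H)$ is immediate since it is conjugation by the bounded operator $\tau$). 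Since $\tilde\sigma$ acts on the central $\U(1)$ by $z \mapsto \bar z$ and covers the smooth $\sigma$, it is smooth by the universal property of the central extension (equivalently, by trivializing locally against smooth sections of $\Imp(H) \to \O_{\mathrm{res}}(H)$).

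\textbf{Main obstacle.} The essential point is Step 1, i.e.\ pinning down the precise relation between the abstract modular conjugation $J$ and the geometric reflection $\tau$, including the correct sign/parity bookkeeping. Everything else is then a short calculation or a standard pullback argument. Once Step 1 is in hand, Steps 2 and 3 are essentially automatic.
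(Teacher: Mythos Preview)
The paper does not give its own proof of this lemma; it is stated with a citation to \cite[Prop.~4.9 \& 4.11]{Kristel2019} and no argument is supplied. So there is nothing in the paper to compare against beyond the reference itself.

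Your outline is the right shape, and you correctly flag Step~1 as the crux. Two points, however, are not quite right as written. First, the formula $J\pi(v)J=\varepsilon(v)\,\pi(\tau v)$ with a scalar $\varepsilon(v)\in\{\pm1\}$ is imprecise. The modular conjugation implements an anti-isomorphism $A\to A'$, and by twisted duality (\cref{ThmTwistedDuality}) the commutant $A'$ differs from the super commutant $A_\perp$ by a Klein twist; the honest identity therefore involves the grading operator (equivalently, an adjoint), not merely a scalar sign. Your phrase ``depending only on the degree/parity'' is puzzling, since $\pi(v)$ is odd for every $v\in H$, so there is no parity of $v$ to distinguish. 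With the correct formula in hand Step~2 does go through, but the cancellation then hinges on the implementer $U$ being homogeneous for the grading (even, for $g$ in the identity component of $\O_{\res}(H)$; cf.\ the proof of \cref{LemmaUinNA}), not on ``$g$ preserving the grading''.

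Second, your smoothness argument in Step~3 is circular. Writing $\tilde\sigma(s(g)z)=s'(\sigma(g))\,h(g)\,\bar z$ in local trivializations reduces smoothness of $\tilde\sigma$ to smoothness of $g\mapsto h(g)=s'(\sigma(g))^{-1}\,Js(g)J\in\U(1)$; but establishing that this depends smoothly on $g$ is precisely the smoothness of $U\mapsto JUJ$ in the Banach Lie topology on $\Imp(H)$, which is strictly finer than the strong topology on $\U(\mathfrak F)$. Knowing that $\tilde\sigma$ is $\U(1)$-anti-equivariant and covers the smooth map $\sigma$ does not close this gap. One needs an independent argument using the explicit Banach Lie structure on $\Imp(H)$, as in the cited reference.
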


Now we are in position to  set up the stringor representation as a 2-group homomorphism
\begin{equation*}
\mathcal{R}:\GString(d) \to \mathcal{U}(A)\text{.}
\end{equation*}
It is defined on the level of objects and morphisms by the continuous group homomorphisms\begin{align*}
\mathcal{R}_0 &: \GString(d)_0 \to \Aut^{*}(A): \gamma \mapsto R_0(\gamma)
\\   
\mathcal{R}_1 &: \GString(d)_1 \to N(A): \Phi \mapsto \Omega^\prime(\Phi) %\tilde R_1(\Phi)\text{,}
\end{align*}
where $R_0$ was defined in \cref{definition-of-R0} and $\Omega^\prime$ was defined in \cref{restriction-to-normaliser}.

\begin{theorem}
\label{stringor-representation-for-2-groups}
The group homomorphisms $\mathcal{R}=(\mathcal{R}_0,\mathcal{R}_1)$ form a continuous 2-group homomorphism
\begin{equation*}
\mathcal{R}:\GString(d) \to \mathcal{U}(A)\text{.} 
\end{equation*}
Moreover, we have $\mathscr{X}(\mathcal{R})=\mathscr{R}$, i.e., $\mathcal{R}$ is the 2-group analog of the stringor representation $\mathscr{R}$.
\end{theorem}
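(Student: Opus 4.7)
The claim has two parts: that $\mathcal{R}$ is a continuous 2-group homomorphism, and that applying the functor $\mathscr{X}$ to $\mathcal{R}$ recovers the stringor representation $\mathscr{R}$. For the first part, both $\mathcal{R}_0 = R_0$ and $\mathcal{R}_1 = \Omega^\prime$ are continuous group homomorphisms by construction (established during the set-up leading to \cref{big-commutative-diagram}), so the plan is to verify the three compatibility conditions defining a functor of internal groupoids: commutation with the source, target, and identity-assigning maps of $\GString(d)$ and $\mathcal{U}(A)$.

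Target and source compatibility are direct consequences of the machinery already assembled. Let $\tilde U \in \widetilde{P_e\Spin(d)^{[2]}}$ project to $(\gamma_1, \gamma_2) \in P_e\Spin(d)^{[2]}$, so that $\Omega^\prime(\tilde U)$ implements $\omega(\gamma_1 \cup \gamma_2)$. Since $\gamma_1 \cup \gamma_2$ and $\gamma_1 \cup \gamma_1$ agree on $[0,\pi]$, \cref{DiagramNA} yields $t_A(\Omega^\prime(\tilde U)) = \overline{\Cl}{}^\prime_{\omega(\gamma_1 \cup \gamma_2)} = \overline{\Cl}{}^\prime_{\omega(\gamma_1 \cup \gamma_1)} = R_0(\gamma_1) = \mathcal{R}_0(t(\tilde U))$. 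For source compatibility, \cref{implementation-and-J} shows that $J\Omega^\prime(\tilde U)J$ implements $\omega(\gamma_2 \cup \gamma_1)$, and the same restriction argument (now applied to the first half of $\gamma_2 \cup \gamma_1$) gives $s_A(\Omega^\prime(\tilde U)) = t_A(J\Omega^\prime(\tilde U)J) = R_0(\gamma_2) = \mathcal{R}_0(s(\tilde U))$.

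The main obstacle is identity compatibility: one must show that $\Omega(i(\gamma)) = i_{\mathcal{U}(A)}(R_0(\gamma))$, i.e., that the implementer on $\mathfrak{F}$ produced by the fusion factorization $i: P_e\Spin(d) \to \widetilde{L\Spin(d)}$ coincides with the canonical implementer supplied by \cref{ThmHaagerupStandardForm}. Both operators implement the same Bogoliubov automorphism $\overline{\Cl}_{\omega(\gamma \cup \gamma)}$, hence differ only by a unitary in $A' \cap \U(\mathfrak{F})$. By \cref{ThmHaagerupStandardForm}, the canonical implementer is uniquely singled out among all implementers by two additional properties: commutation with the modular conjugation $J$, and preservation of the positive cone $P$. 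The first reduces, via \cref{implementation-and-J}, to the symmetry $\sigma(\omega(\gamma \cup \gamma)) = \omega(\gamma \cup \gamma)$ combined with a $\sigma$-equivariance property of the fusion factorization, which must be imported from its explicit construction in the implementer model, \cite[\S 5]{Kristel2019}. Preservation of $P$ then follows from $J$-commutation together with vacuum invariance of $\Omega(i(\gamma))$, again a property of the specific construction.

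For the second claim, $\mathscr{X}(\mathcal{R}) = \mathscr{R}$, one computes directly from the definition of $\mathscr{X}$. The object part of $\mathscr{X}(\mathcal{R})$ is $\mathcal{R}_0 = R_0$, and its morphism part is the restriction of $\mathcal{R}_1 = \Omega^\prime$ to $\ker(s_{\GString(d)}) \subset \widetilde{P_e\Spin(d)^{[2]}}$. Under the canonical identification $\ker(s_{\GString(d)}) \cong \widetilde{L_{[0,\pi]}\Spin(d)}$ (an element projecting to $(\gamma, c_e)$ corresponds to a lift of the loop $\gamma \cup c_e$ supported in $[0,\pi]$), this restriction of $\Omega^\prime$ is exactly $R_1$ as defined before \cref{big-commutative-diagram}. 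Hence $\mathscr{X}(\mathcal{R}) = (R_0, R_1) = \mathscr{R}$, completing the proof.
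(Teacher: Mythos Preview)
Your treatment of source and target compatibility, and of the identification $\mathscr{X}(\mathcal{R})=\mathscr{R}$, matches the paper's proof. The substantive divergence is in the identity compatibility step, and there your argument has a genuine gap.

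You try to show $\Omega'(i_{ff}(\gamma)) = i_{can}(R_0(\gamma))$ by verifying that $\Omega'(i_{ff}(\gamma))$ satisfies the three characterizing properties in \cref{ThmHaagerupStandardForm}: implementing $R_0(\gamma)$, commuting with $J$, and preserving the positive cone $P$. The first is clear, and the second can indeed be extracted (the paper does this in a commented-out alternative argument, via the $\tilde\sigma$-invariance of the fusion factorization). But your justification for the third --- that $P$-preservation ``follows from $J$-commutation together with vacuum invariance of $\Omega(i(\gamma))$'' --- does not go through. Implementers of nontrivial Bogoliubov automorphisms do \emph{not} fix the Fock vacuum in general: $\omega(\gamma\cup\gamma)$ preserves the Lagrangian $L$ only when $\gamma$ is constant, so the vacuum is moved. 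No such vacuum-invariance property is established in \cite{Kristel2019}, and without it you cannot conclude $i(\theta)P=P$.

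The paper sidesteps the positive cone entirely. Instead of checking Haagerup's characterization, it forms the difference $f(\gamma) := i_{can}(R_0(\gamma))\cdot \Omega'(i_{ff}(\gamma))^{-1}$ and observes, using the already-established source \emph{and} target compatibility, that $f(\gamma)\in \ker(s_A)\cap\ker(t_A)=\U(A)\cap\U(A')=\U(1)$ (here the factor property is used). One then checks that $f:P_e\Spin(d)\to\U(1)$ is a continuous group homomorphism; since $P_e\Spin(d)$ is a regular Lie group, $f$ is smooth, and any smooth homomorphism $P_e\Spin(d)\to\U(1)$ is trivial by \cite[Thm.~2.1.2]{LudewigWaldorf2Group}. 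This soft argument is what you are missing: it replaces the unverifiable cone condition by a rigidity statement about homomorphisms out of $P_e\Spin(d)$.
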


\begin{proof}
If $\Phi\in \widetilde{P_e\Spin(d)^{[2]}}$ projects to $\gamma_1 \cup \gamma_2$,
\cref{DiagramNA,definition-of-R0} imply
\begin{equation}
\label{proof-target-respected}
t_{\mathcal{U}(A)}(\mathcal{R}_1(\Phi)) = t_{A}(\Omega^\prime(\Phi)) = \overline{\Cl}^\prime_{\omega(\gamma_1 \cup \gamma_2)} = \overline{\Cl}^\prime_{\omega(\gamma_1 \cup \gamma_1)} = R_0(\gamma_1)=\mathcal{R}_0(t(\Phi)).
\end{equation}
Moreover, \cref{implementation-and-J} shows that
\begin{equation}
\label{proof-source-respected}
s_{\mathcal{U}(A)}(\mathcal{R}_1(\Phi)) = t_{A}(J\Omega^\prime(\Phi)J) = \overline{\Cl}^\prime_{\omega(\gamma_2 \cup \gamma_1)} = R_0(\gamma_2)=\mathcal{R}_0(s(\Phi)).
\end{equation}
This shows that $\mathcal{R}_0$ and $\mathcal{R}_1$ respect sources and targets. 

Next we show that $\mathcal{R}_0$ and $\mathcal{R}_1$ respect the identity morphisms, i.e., that
\begin{equation}
\label{equality-of-identities}
i_{can}(\Rep_0(\gamma)) = \Omega^\prime(i_{ff}(\gamma))
\end{equation}
holds for all $\gamma\in P_e\Spin(d)$,
where $i_{can}$ is the canonical implementation \eqref{DefinitionIdentityCanonicalImplementatoin} and $i_{ff}$ is the fusion factorization \eqref{DefinitionFusionFactorization}.
Since by \cref{ThmHaagerupStandardForm}, elements in the image of $i_{can}$ commute with $J$, we have
\begin{equation*}
 s_{\mathcal{U}(A)}(i_{can}(\Rep_0(\gamma)))= t_{A}(J i_{can}(\Rep_0(\gamma))J) = t_{\mathcal{U}(A)}(i_{can}(\Rep_0(\gamma)))=\Rep_0(\gamma).
 \end{equation*}
Moreover, since $i_{ff}(\gamma)$ projects to $\gamma\cup\gamma$, by \cref{proof-target-respected,proof-source-respected} we get 
\begin{equation*}
s_{\mathcal{U}(A)}( \Omega^\prime(i_{ff}(\gamma)))=t_{\mathcal{U}(A)}( \Omega^\prime(i_{ff}(\gamma)))=R_0(\gamma).
\end{equation*} 
We let $f: P_e\Spin(d) \to N(A)$ be the difference between the two expressions in \cref{equality-of-identities}, i.e.,
\begin{equation*}
f(\gamma) :=i_{can}(\Rep_0(\gamma)) \cdot  \Omega^\prime(i_{ff}(\gamma))^{-1}\text{.} 
\end{equation*}
The above calculations shows that $f(\gamma)\in \mathrm{ker}(s_{\mathcal{U}(A)})\cap \mathrm{ker}(t_{\mathcal{U}(A)})=\U(A')\cap \U(A)=\U(1)$, since $A$ is a factor. 
We observe that we constructed a continuous map
\begin{equation*}
  f: P_e \Spin(d) \longrightarrow \U(1), 
\end{equation*}
which, in fact, is a group homomorphism, as
\begin{equation*}
\begin{aligned}
  f(\gamma \cdot \gamma^\prime) &= i_{can}(\Rep_0(\gamma))\cdot \underbrace{i_{can}(\Rep_0(\gamma^\prime)) \cdot \Omega^\prime(i_{ff}(\gamma^\prime))^{-1}}_{=f(\gamma')\in \U(1)} \cdot\, \Omega^\prime(i_{ff}(\gamma))^{-1} = f(\gamma) \cdot f(\gamma^\prime)\text{.}
\end{aligned}
\end{equation*}
Here we used that the middle term is contained in the center of $N(A)$ and hence can be pulled out.
Since $P_e\Spin(d)$ is a regular Lie group, every continuous group homomorphism is smooth. 
However, by \cite[Thm.~2.1.2]{LudewigWaldorf2Group}, any smooth group homomorphism from $P_e \Spin(d)$ to $\U(1)$ is trivial, $f=1$.
Hence \cref{equality-of-identities} holds.
%
\begin{comment}
Here is another argument using \cref{implementation-and-J}.

By \cref{ThmHaagerupStandardForm}, $i_{can}(\Rep_0(\gamma))$ commutes with $J$. 
We show next that also $\Omega^\prime(i_{ff}(\gamma))$, and hence $f(\gamma)$ commutes with $J$. Indeed, by \cref{implementation-and-J}, conjugation by $J$ induces a  Lie group homomorphism
\begin{equation*}
\tilde{\sigma} : \widetilde{P_e\Spin(d)^{[2]}} \to \widetilde{P_e\Spin(d)^{[2]}}
\end{equation*}
which covers the flip automorphism of $P_e \Spin(d)^{[2]}$ and (by anti-linearity of $J)$ is $\U(1)$-\emph{anti}-equivariant.
By \cite[Lemma~3.3.4]{LudewigWaldorf2Group}, there exists a unique fusion factorization $i$ such that $\tilde{\sigma} \circ i = i$, and by the uniqueness of fusion factorizations (\cite[Thm.~3.3.5]{LudewigWaldorf2Group}) we get $i = i_{ff}$.
This shows that $\Omega^\prime(i_{ff}(\gamma))$ commutes with $J$, for every $\gamma \in P_e \Spin(d)$.

The restriction of the conjugation with $J$ to $\U(1)\subset \mathcal{B}(\mathfrak{F})$ is complex conjugation. Thus, $f(\gamma)\in \{\pm 1\}$. Since $f$ is a continuous map on a connected space, it is constant and determined by its value on the constant path, showing $f(\gamma)=1$. This shows \cref{equality-of-identities}.  
\end{comment}
%
It follows now from \cref{LemmaMinimalData2Group} that $\mathcal{R}$ respects composition and inversion, and hence is a 2-group homomorphism. 

The strict intertwiner $\mathscr{X}(\mathcal{R})$ consists of the group homomorphisms $\mathscr{X}(\mathcal{R})_0=\mathcal{R}_0=R_0$ and $\mathscr{X}(\mathcal{R})_1 = \mathcal{R}_1|_{\mathrm{ker}(s)}= \Omega^\prime|_{\widetilde{\Omega_{[0,\pi]}\Spin(d)}}= R_1$. This shows the claimed equality $\mathscr{X}(\mathcal{R})=\mathscr{R}$.
\end{proof}

\begin{remark}
The equality $\mathscr{X}(\mathcal{R})=\mathscr{R}$ is equivalent to the statement that the diagram of 2-group homomorphisms
\begin{equation*}
\alxydim{}{\GString(d) \ar[r]^-{\mathcal{R}} \ar[d] & \mathcal{U}(A) \ar[d] \\ \mathcal{G}(\XString(d)) \ar[r]_-{\mathcal{G}(\mathscr{R})} & \mathcal{G}(\UAUT(A))}
\end{equation*} 
 whose vertical arrows are the 2-group isomorphisms of \cref{2-group-version-of-string,2-group-version-of-UAUT},  is strictly commutative.  
\end{remark}

\bibliography{bibfile}
\bibliographystyle{kobib}

\end{document}